\newcommand{\N}{\mathbb{N}}
\newcommand{\R}{{\mathbb{R}}}
\newcommand{\Z}{{\mathbb{Z}}}
\newcommand{\dd}{{{\rm d}}}
\newcommand{\ii}{{\rm i}}
\newcommand{\cf}{\emph{cf.}}
\newcommand{\ie}{{\emph{i.e.}}}
\newcommand{\eg}{{\emph{e.g.}}}
\newcommand{\la}{\lambda}
\newcommand{\eps}{\varepsilon}
\newcommand{\esssup}{\operatorname*{ess \,sup}}
\newcommand{\Dom}{{\operatorname{Dom}}}
\newcommand{\supp}{\operatorname{supp}}
\newcommand{\loc}{\mathrm{loc}}
\newcommand{\BigO}{\mathcal{O}}
\newcommand{\ls}{\lesssim}
\newcommand{\gs}{\gtrsim}
\theoremstyle{plain}
\newtheorem{theorem}{Theorem}[section]
\newtheorem{lemma}[theorem]{Lemma}
\newtheorem{proposition}[theorem]{Proposition}
\theoremstyle{definition}
\newtheorem{asm}{Assumption}
\newcommand\cF{\mathcal F}
\newcommand\cI{\mathcal I}
\newcommand\cJ{\mathcal J}
\newcommand{\w}{\eta}
\begin{document}

\numberwithin{equation}{section}

\title[Concentration of eigenfunctions]{Concentration of eigenfunctions of Schr\"odinger operators}

\author{Boris Mityagin}
\address[Boris Mityagin]{Department of Mathematics, The Ohio State University, 231 West 18th Ave,
Columbus, OH 43210, USA}
\email{mityagin.1@osu.edu, boris.mityagin@gmail.com}

\author{Petr Siegl}
\address[Petr Siegl]{School of Mathematics and Physics, Queen's University Belfast, University Road, Belfast, BT7 1NN, UK}
\email{p.siegl@qub.ac.uk}

\author{Joe Viola}
\address[Joe Viola]{Laboratoire de Math\'ematiques J.~Leray, UMR 6629 du CNRS, Universit\'e de Nantes, 2, rue de la Houssini\`ere, 44322 Nantes Cedex 03, France}
\email{Joseph.Viola@univ-nantes.fr}

\subjclass[2010]{35P15,35L05}

\keywords{Schr\"odinger operators, eigenfunctions, limit measure}

\date{July 21, 2020}

\begin{abstract}
We consider the limit measures induced by the rescaled eigenfunctions of single-well Schr\"odinger operators. We show that the limit measure is supported on $[-1,1]$ and with the density proportional to $(1-|x|^\beta)^{-1/2}$ when the non-perturbed potential resembles $|x|^\beta$, $\beta >0$, for large $x$, and with the uniform density for super-polynomially growing potentials. We compare these results to analogous results in orthogonal polynomials and semiclassical defect measures.
\end{abstract}

\thanks{
P.S.~acknowledges the support of the \emph{Swiss National Science Foundation}, SNSF Ambizione grant No.~PZ00P2\_154786, till December 2017 and of the OSU for his stays there in October 2017 and May 2018. J.V.\ acknowledges the support of the R\'egion Pays de la Loire through the project EONE (\'Evolution des Op\'erateurs Non-Elliptiques). The authors acknowledge the Research in Paris stay at the Institute of Henri Poincar\'e, Paris, September 3-23, 2018, during which an essential part of the work has been done. 
We are grateful to our colleagues, Doron Lubinsky,  Georgia Tech., Atlanta, Georgia; Andrei Martinez-Finkelshtein, Baylor University, Waco, Texas; Paul Nevai, the Ohio State University, Columbus, Ohio; Gabriel Rivi\`ere, Universit\'e de Nantes, for helpful discussions on literature and related results. 
}

\maketitle

\section{Introduction}

Let $A$ be a Schr\"odinger operator acting in $L^2(\R)$ 
\begin{equation} \label{aho.def}
A = -\frac{\dd^2}{\dd x^2} + Q(x),
\end{equation}
where $Q$ is a real-valued, even, unbounded single-well potential. More precisely, we suppose that $Q=V+W$, where $V$ is a sufficiently regular single-well (see Assumptions~\ref{asm:V}) and $W$ is its possibly irregular perturbation (satisfying Assumption~\ref{asm:W} that guarantees that $W$ is small in a suitable sense). Our main condition on the potential is that $V$ satisfies  
\begin{equation}\label{asm:V*}
\exists \beta \in (0, \infty], \quad \forall x \in (-1,1), \quad \lim_{t \to +\infty} \frac{V(xt)}{V(t)} = \omega_\beta(x),
\end{equation}
where
\begin{equation}\label{omega.b.def}
\omega_\beta(x):= \begin{cases}
|x|^\beta, & \beta \in (0,\infty),
\\
0, & \beta = \infty.
\end{cases}
\end{equation}
As explained in \cite[Sec.~1.3]{Seneta-1976}, the existence of the limiting function in \eqref{asm:V*} already implies that $\omega_\beta$ is a power of $|x|$ or zero; functions $V$ satisfying \eqref{asm:V*} with $\beta<\infty$ are called \emph{regularly varying}.

It is well-known (also under much weaker assumptions on $Q$) that the operator $A$, defined via its quadratic form, is self-adjoint with compact resolvent, hence its spectrum is real and discrete. In fact, all eigenvalues $\{\lambda_k\}$ of $A$ are simple, thus they can be ordered increasingly and the corresponding eigenspaces are one-dimensional. Since the potential $Q$ is real, eigenfunctions $\{\psi_k\}$ related to $\{\lambda_k\}$ can be selected as real functions satisfying 
\begin{equation}
A \psi_k = \lambda_k \psi_k, \quad \|\psi_k\| =  \|\psi_k\|_{L^2(\R)}= 1, \quad k \in \N.
\end{equation}
These conditions do not determine $\psi_k$ uniquely, since $-\psi_k$ satisfies the same conditions; nonetheless, the squares $\{\psi_k^2\}$ are already uniquely determined. 

Let $x_{\la_k}$ be positive turning points of $V$ corresponding to eigenvalues $\{\la_k\}$, \ie~
\begin{equation}\label{xk.def}
V(x_{\la_k}) = \la_k,\quad  x_{\la_k} > 0, \quad k \in \N. 
\end{equation}
We define non-negative normalized measures on $\R$ induced by the eigenfunctions $\{\psi_k\}$ by 
\begin{equation}\label{nu.k.def}
\dd \mu_k := x_{\la_k} \, \psi_k(x_{\la_k} x)^2 \, \dd x, \quad x \in \R, \quad k \in \N.
\end{equation}
This rescaling transforms the classically forbidden region $\{x \, : \, V(x) > \lambda_k\}$ with (super)-exponential decay of $\psi_k$ to $\R \setminus [-1,1]$ while the rescaled functions $\psi_k(x_{\la_k}\cdot)$ oscillate in $[-1,1]$. 
Notice that we do not include $W$ in the definition of $x_{\la_k}$ and thus in the rescaling of eigenfunctions; the assumptions on the size of $W$ comparing to $V$, see Assumption~\ref{asm:W} and Proposition~\ref{prop:W}, allow for treating $W$ perturbatively.

In this paper, we prove that measures \eqref{nu.k.def} converges (as $k \to \infty$) to a limiting concentration measure supported on $[-1,1]$
\begin{equation}\label{lim.intro}
\dd \mu_*:=\frac{\Gamma(\frac 12 + \frac 1\beta)}{2 \pi^\frac 12 \Gamma(1+\frac 1\beta)} 
\frac{\mathbbm 1_{[-1,1]}(x)}{(1-\omega_\beta(x))^\frac 12} \, \dd x,
\end{equation}
see~Theorem~\ref{thm:lim.EF}. This generalizes the classical result for the harmonic oscillator, \ie~$Q(x)=x^2$, namely \emph{the arcsine law} for the concentration measure
\begin{equation}
\frac{1}{\pi} \frac{\mathbbm 1_{[-1,1]}(x)}{\sqrt{1-x^2}} \, \dd x 
\end{equation}
of the Hermite functions. Limiting measures of the type \eqref{lim.intro} were found for rescaled eigenfunctions with a different normalization for polynomial, possibly complex, potentials in \cite[Thm.~2]{Eremenko-2008-8}. The concentration of eigenfunctions is in particular used in estimates of norms of the spectral projections of non-self-adjoint Schr\"odinger operators obtained through conjugation, see~\cite{Mityagin-2017-272}, in particular, Section 3.

Notice that the condition \eqref{asm:V*} does not require $V$ to be a polynomial. For instance, the potentials below satisfy 
both technical Assumption~\ref{asm:V} and the condition \eqref{asm:V*}:
\begin{equation}\label{V.beta.gamma}
V(x)=|x|^{\alpha} \log (1+ x^2), \quad \alpha >0, 
\end{equation}
lead to the limit
\begin{equation}
\omega_\alpha(x)=|x|^\alpha, \quad x \in (-1,1),
\end{equation}
while for the fast-growing potentials
\begin{equation}\label{V.exp}
V(x)=\exp (|x|^\gamma), \quad \gamma >0,
\end{equation}
the limit reads
\begin{equation}
\omega_\infty(x)=0, \quad x \in (-1,1);
\end{equation}
the latter is not a special case, see Proposition~\ref{prop:nu2}.i). Moreover, one can include further, possibly irregular and unbounded perturbations $W$, see~Proposition~\ref{prop:W} for examples of admissible $W$. 

We emphasize that while the limiting function, if exists, is always homogeneous, this not required for $V$; see examples \eqref{V.beta.gamma} and \eqref{V.exp} above. Thus rescaling leads to a semi-classical operator only in very special cases; a relation of our result and so called semi-classical defect measures in these special cases can be found in Section~\ref{subsec:s-c} below.

This paper is organized as follows. Our results with precise assumptions are formulated in Section~\ref{sec:results} and they are proved in Section~\ref{sec:proofs} relying on asymptotic formulas for the eigenfunctions $\{\psi_k\}$ summarized in Section \ref{subsec:EF.sum}. In Section~\ref{sec:EF.proofs} we prove the asymptotic formulas following and slightly extending the ideas and results in the book \cite[\S22.27]{Titchmarsh-1958-book2} and in \cite{Giertz-1964-14}. Finally, in Section~\ref{sec:discussion} our results are compared to the existing literature in more detail.
\subsection{Notation}
Throughout the paper, we employ notations and results summarized in Section~\ref{subsec:EF.sum}. In particular, to avoid many appearing constants, for $a,b \geq0$, we write $a\ls b$ if there exists a constant $C>0$, independent of any relevant variable or parameter, such that $a\leq Cb$; the relation $a\gs b$ is introduced analogously. By $a\approx b$ it is meant that $a\ls b$ and $a\gs b$. The natural numbers are denoted by $\N = \{1,2,\dots\}$ and $\N_0=\N \cup \{0\}$.

\section{Assumptions and results}
\label{sec:results}

Our results are obtained under the following assumptions on the potential $Q = V + W$. The conditions on $V$, similar to those used in~\cite{Titchmarsh-1958-book2, Giertz-1964-14}, guarantee that $V$ is an even single-well potential with sufficient regularity to obtain convenient asymptotic formulas for eigenfunctions (associated with large eigenvalues) of the corresponding Schr\"odinger operator, see Section~\ref{subsec:EF.sum} and \ref{sec:EF.proofs} for details. The conditions on $W$ ensure that it is indeed a small perturbation which does not essentially affect the shape of the eigenfunctions.  

\begin{asm}\label{asm:V} 
Let $V: \R \to \R$ satisfy the following conditions.

\begin{enumerate}[i)]
\item $V \in C(\R) \cap C^2(\R \setminus \{0\})$ is even, 
\begin{equation}\label{V.lim}
\lim_{|x| \to +\infty} V(x) = +\infty,
\end{equation}
\item there exists $\xi_0>0$ such that $V \in C^3(\R\setminus [-\xi_0,\xi_0])$, 
\begin{equation}\label{V.pos}
V(x)>0, \ V'(x)>0, \qquad x \geq \xi_0, 
\end{equation}
and
\begin{equation}\label{V.int}
\frac{V'^2}{V^\frac52} \in L^1((\xi_0,\infty)), \qquad \frac{V''}{V^\frac32} \in L^1((\xi_0,\infty)),
\end{equation}
\item there exists $\nu \geq -1$ such that for all $x \geq \xi_0$
\begin{equation}\label{V.asm}
\begin{aligned}
V'(x) & \approx V(x) x^\nu,
\\
|V''(x)| & \ls V'(x) x^\nu, \quad |V'''(x)| \ls V'(x) x^{2\nu}.
\end{aligned}
\end{equation}
\hfill $\blacksquare$
\end{enumerate}
\end{asm}

Assumption~\ref{asm:V} is an extension of conditions in \cite[\S22.27]{Titchmarsh-1958-book2} where the case $\nu=-1$, \ie~polynomial-like potentials, is analyzed; conditions analogous to Assumption~\ref{asm:V} are used also in \cite{Krejcirik-2019-276,Arifoski-2020-52} where the resolvent estimates of non-self-adjoint Schr\"odinger operators are given. The assumptions of \cite{Giertz-1964-14} allow for fast growing potentials and are based on suitable restrictions of $V'''$, see \cite[Condition~2]{Giertz-1964-14}.

The first assumption \eqref{V.asm} implies there are two constants $0< c_1 \leq c_2 < \infty$ such that for all $x \geq \xi_0$
\begin{equation}\label{V.beh}
\begin{aligned}
x^{c_1} &\ls V(x) \ls x^{c_2},&&  \nu = -1,
\\
\exp(c_1 x^{\nu+1}) & \ls V(x) \ls \exp(c_2 x^{\nu+1}), && \nu > -1.
\end{aligned}
\end{equation}
This can be seen from (with $\xi_0\leq x_1 \leq  x_2$) 
\begin{equation}\label{nu.xt.ineq.new}
\log \frac{V(x_2)}{V(x_1)} = \int_{x_1}^{x_2} \frac{V'(s)}{V(s)} \, \dd s 
\approx \int_{x_1}^{x_2} s^\nu \, \dd s
= 
\begin{cases}
\displaystyle
\frac{x_2^{\nu+1}-x_1^{\nu+1}}{\nu+1}, & \nu>-1, 
\\[2mm]
\displaystyle \log \frac{x_2}{x_1}, & \nu=-1.
\end{cases}
\end{equation}
The crucial technical observation used frequently in the proofs is that \eqref{V.asm} imply that, for any $\eps \in (0,1)$ and all sufficiently large $x>0$, we have
\begin{equation}\label{V.Delta.new}
V^{(j)}(x + \Delta) \approx V^{(j)}(x), \qquad |\Delta| \leq \eps x^{-\nu}, \quad j=0,1,
\end{equation}
\ie~we have a control of how much $V$ and $V'$ varies over the intervals of size $x^{-\nu}$, see Lemma~\ref{lem:delta}. Assumptions \eqref{V.int} and \eqref{V.asm} also imply that 
\begin{equation}
\frac{V'(x)}{V(x)^\frac32} = o(1), \qquad x \to +\infty,
\end{equation}
see Lemma~\ref{lem:VV'}, which is almost optimal condition for the separation property of the domain of the self-adjoint Schr\"odinger operator $B=-\dd^2/\dd x^2 +V(x)$, namely, 
\begin{equation}
\Dom(B) = W^{2,2}(\R) \cap \{f \in L^2(\R) \, : \, Vf \in L^2(\R)\},
\end{equation}
see \cite{Everitt-1978-79,Evans-1978-80,Krejcirik-2017-221}; note that the separation property might be lost for $A$ due to the possibly irregular $W$.

The following proposition relates the parameter $\nu$ and the condition~\eqref{asm:V*}.

\begin{proposition}\label{prop:nu2}
	Let $V$ satisfy Assumption~\ref{asm:V}. 
	\begin{enumerate}[i)]
		\item If $\nu>-1$, then $V$ satisfies the condition \eqref{asm:V*} with $\beta = \infty$. 
		\item If $\nu=-1$ and $V$ satisfies the condition \eqref{asm:V*}, then $\beta \in (0,\infty)$.
	\end{enumerate}
\end{proposition}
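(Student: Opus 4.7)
The plan is to use the identity \eqref{nu.xt.ineq.new}, which packages Assumption~\ref{asm:V} into a sharp two-sided estimate on $\log(V(x_2)/V(x_1))$. Throughout I will assume $x\in(0,1)$; the case $x\in(-1,0)$ then follows from the evenness of $V$, and $x=0$ is trivial since $V(t)\to\infty$.

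For part i), I apply \eqref{nu.xt.ineq.new} with $x_1 = xt$ and $x_2 = t$ for $t$ large enough so that $xt\geq \xi_0$. Since $\nu>-1$ and $1-x^{\nu+1}>0$, the estimate gives
\begin{equation*}
\log \frac{V(t)}{V(xt)} \;\gtrsim\; \frac{t^{\nu+1}-(xt)^{\nu+1}}{\nu+1} \;=\; \frac{t^{\nu+1}(1-x^{\nu+1})}{\nu+1} \;\longrightarrow\; +\infty, \qquad t\to+\infty,
\end{equation*}
so $V(xt)/V(t)\to 0 = \omega_\infty(x)$, which is precisely \eqref{asm:V*} with $\beta=\infty$.

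For part ii), I fix $x\in(0,1)$ and again set $x_1=xt$, $x_2=t$ for $t\geq \xi_0/x$, but now with $\nu=-1$. The relation \eqref{nu.xt.ineq.new} then reads $\log(V(t)/V(xt))\approx \log(1/x)$; writing the implicit constants as $0<c_1\leq c_2<\infty$, one has
\begin{equation*}
c_1 \log(1/x) \;\leq\; \log\frac{V(t)}{V(xt)} \;\leq\; c_2 \log(1/x),
\end{equation*}
uniformly in such $t$. Exponentiating yields $x^{c_2}\leq V(xt)/V(t)\leq x^{c_1}$. If moreover $V$ satisfies \eqref{asm:V*} with some limiting exponent $\beta\in(0,\infty]$, then passing $t\to+\infty$ gives $x^{c_2}\leq \omega_\beta(x)\leq x^{c_1}$. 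Since $x^{c_2}>0$ for every $x\in(0,1)$, this excludes $\omega_\beta\equiv 0$, hence $\beta\neq\infty$, i.e.\ $\beta\in(0,\infty)$.

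The argument is essentially a bookkeeping exercise on top of \eqref{nu.xt.ineq.new}, so I do not anticipate any real obstacle; the only point requiring minor care is restricting $t$ to be large enough that $xt\geq \xi_0$ so that \eqref{V.asm} (and hence \eqref{nu.xt.ineq.new}) is applicable, which is harmless because both conclusions concern the limit as $t\to+\infty$.
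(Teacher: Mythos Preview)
Your proof is correct and follows essentially the same approach as the paper: both apply \eqref{nu.xt.ineq.new} with $x_1=xt$, $x_2=t$ to obtain the two-sided estimate on $\log(V(t)/V(xt))$, then read off the limit (part~i) or sandwich $\omega_\beta(x)$ between two positive powers of $x$ to rule out $\beta=\infty$ (part~ii). The only cosmetic difference is that you make the handling of $x\in(-1,0)$ and $x=0$ explicit, which the paper leaves implicit.
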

\begin{proof}
	Let $x \in (0,1)$ be fixed. From \eqref{nu.xt.ineq.new}, we have that for all $t \geq \xi_0/x$ 
		\begin{equation}\label{nu.xt.ineq.old}
		\log \frac{V(t)}{V(xt)} \approx  
		\begin{cases}
		\displaystyle
		\frac{t^{\nu+1}}{\nu+1}(1-x^{\nu+1}), & \nu>-1, 
		\\[2mm]
		\displaystyle -\log x, & \nu=-1.
		\end{cases}
		\end{equation}
		Thus, if $\nu>-1$, we get that for every $x \in (0,1)$ 
		\begin{equation}
		\lim_{t \to +\infty} \frac{V(xt)}{V(t)} = 0.
		\end{equation}
		If $\nu =-1$ and the condition \eqref{asm:V*} holds, then for every $x \in (0,1)$
		\begin{equation}
		x^{\beta_1} \leq  \lim_{t \to +\infty} \frac{V(xt)}{V(t)} \leq x^{\beta_2} 
		\end{equation}
		where $\beta_1, \beta_2 \in (0,\infty)$ are independent of $x$.
\end{proof}
 
In the next step, we formulate a condition on the perturbation $W$ that guarantees that it is small in a suitable sense (arising in the proof of Theorem~\ref{thm:EF.asym}). The appearing weight $w_1^{-2}$ is naturally related with the main part of the potential $V$, although, the precise formula \eqref{w12.def} might seem more complicated to grasp. It includes the turning  point $x_\la$ of $V$, the quantity $a_\la$ (the value of $V'$ at the turning point) and a ``natural small region'' around the turning point (characterized by $\delta$ and $\delta_1$), see Section~\ref{subsec:EF.sum} for details. Examples of perturbations satisfying Assumption~\ref{asm:W} are given in Proposition~\ref{prop:W} below.
 
\begin{asm}\label{asm:W} 
Let $w_1$ be as in \eqref{w12.def} below. Let $W: \R \to \R$ be even, locally integrable and satisfy 
		\begin{equation}\label{JW.def}
		\cJ_W(\la):= \int_0^\infty \frac{W(s)}{w_1(s)^2} \, \dd s = o(1), \quad \la \to +\infty. 
		\end{equation}
\hfill $\blacksquare$
\end{asm}
\begin{proposition}\label{prop:W}
Let $V(x) = |x|^\beta$, $\beta>0$, and let $W = W_1+W_2$ where $\supp W_1$ is compact,  $W_1 \in L^1(\R)$, $W_2 \in L^\infty_{\rm \loc}(\R)$ and let $|W_2(x)| \ls |x|^\gamma$, $x \in \R$, for some $\gamma\in \R$. Then \eqref{JW.def} is satisfied if $\beta>2\gamma+2$. Moreover, if $\beta>1$, already $W_1 \in L^1(\R)$ suffices (one can omit the condition on the compactness of support of $W_1$). 
\end{proposition}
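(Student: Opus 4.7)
The plan is to exploit the WKB-type structure of $w_1$ alluded to after \eqref{w12.def}: away from the turning point $w_1(s)^2$ should behave like $|V(s)-\lambda|^{1/2}$ (the Liouville--Green amplitude squared), while on a transition window of width $\delta + \delta_1 \approx a_\lambda^{-1/3}$ around $x_\lambda$ it should behave like $a_\lambda^{1/3}$; in the forbidden region an additional exponential decay factor is present. For the specific choice $V(x)=|x|^\beta$ we have the explicit scaling
\begin{equation*}
x_\lambda = \lambda^{1/\beta}, \qquad a_\lambda = V'(x_\lambda) = \beta\, \lambda^{(\beta-1)/\beta}, \qquad \delta + \delta_1 \approx \lambda^{-(\beta-1)/(3\beta)},
\end{equation*}
and I will estimate $\cJ_W(\lambda)$ by cutting $(0,\infty)$ into the oscillatory region $I_1 = (0,x_\lambda - \delta)$, the turning region $I_2 = (x_\lambda - \delta, x_\lambda + \delta_1)$, and the forbidden region $I_3 = (x_\lambda + \delta_1, \infty)$, and then handling $W_1$ and $W_2$ separately.

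For $W_2$ I would treat $I_1$ first, since this is where the critical exponent appears. Using the substitution $s = x_\lambda u$ with $x_\lambda = \lambda^{1/\beta}$,
\begin{equation*}
\int_{I_1} \frac{|W_2(s)|}{w_1(s)^2}\,\dd s \;\ls\; \int_0^{x_\lambda-\delta} \frac{s^\gamma}{(\lambda - s^\beta)^{1/2}}\,\dd s \;=\; \lambda^{(2\gamma+2-\beta)/(2\beta)} \int_0^{1-\delta/x_\lambda} \frac{u^\gamma}{(1-u^\beta)^{1/2}}\,\dd u,
\end{equation*}
and the remaining integral is uniformly bounded in $\lambda$ because the singularity $(1-u^\beta)^{-1/2} \sim (\beta(1-u))^{-1/2}$ at $u=1$ is integrable, while near $u=0$ the local boundedness of $W_2$ takes over (so $\gamma$ can be replaced by $\max(\gamma,0)$ there). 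Thus $I_1$ contributes $o(1)$ precisely when $\beta > 2\gamma+2$. On $I_2$, $|W_2(s)| \ls x_\lambda^\gamma = \lambda^{\gamma/\beta}$ and $w_1^{-2} \approx a_\lambda^{-1/3}$ over a window of width $\approx a_\lambda^{-1/3}$, giving a bound $\lambda^{\gamma/\beta}\, a_\lambda^{-2/3} \approx \lambda^{(3\gamma - 2\beta + 2)/(3\beta)}$, which is of smaller order than the $I_1$ bound (the inequality $(3\gamma-2\beta+2)/(3\beta) < (2\gamma+2-\beta)/(2\beta)$ follows from $\gamma > -2$, and the case $\gamma \leq -2$ is trivial since then $\beta > 2\gamma+2$ is automatic and everything decays). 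On $I_3$ the exponential factor in $w_1^{-2}$ makes the tail contribution exponentially small in $\lambda$, so $I_3$ is always negligible.

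For $W_1$ with compact support in $[-R,R]$: for $\lambda$ large enough that $R < x_\lambda/2$, we have $V(s) \leq \lambda/2$ and $\lambda - V(s) \approx \lambda$ on $[0,R]$, hence $w_1(s)^{-2} \ls \lambda^{-1/2}$ there, and $\int_0^\infty |W_1(s)|/w_1(s)^2\,\dd s \ls \lambda^{-1/2}\|W_1\|_{L^1} = o(1)$. For the ``moreover'' statement, the key observation is that the global maximum of $w_1^{-2}$ is attained in $I_2$ and equals $\approx a_\lambda^{-1/3} \approx \lambda^{-(\beta-1)/(3\beta)}$; this tends to $0$ as $\lambda\to\infty$ exactly when $\beta>1$. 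Therefore, for such $\beta$,
\begin{equation*}
\int_0^\infty \frac{|W_1(s)|}{w_1(s)^2}\,\dd s \;\leq\; \|w_1^{-2}\|_{L^\infty(\R_+)}\,\|W_1\|_{L^1(\R)} \;=\; o(1),
\end{equation*}
regardless of the support of $W_1$.

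The main technical obstacle I foresee is being honest about the quantitative description of $w_1$ near the turning point (the precise roles of $\delta,\delta_1$ and the matching between the Liouville--Green and Airy regimes), since the definition \eqref{w12.def} has not yet been seen at this point of the paper; once the three-region estimates $w_1(s)^{-2} \approx (\lambda-V(s))^{-1/2}$, $w_1(s)^{-2} \approx a_\lambda^{-1/3}$, and the exponentially small bound in the forbidden region are granted, the proposition reduces to the elementary scaling computation above.
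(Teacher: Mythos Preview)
Your approach is essentially the paper's: split into the oscillatory, turning, and forbidden regions, rescale $s=x_\lambda t$, and read off the exponents. The treatment of $W_1$ (both the compactly supported case and the $\beta>1$ case via the global bound $\|w_1^{-2}\|_{L^\infty}\approx a_\lambda^{-1/3}$) and of $W_2$ on $I_1$ and $I_2$ matches the paper's computation.

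There is, however, a genuine error in your handling of $I_3$. You assert that ``in the forbidden region an additional exponential decay factor is present'' and that ``the exponential factor in $w_1^{-2}$ makes the tail contribution exponentially small''. This is false: by \eqref{w12.def}, $w_1(s)^{-2}=(V(s)-\lambda)^{-1/2}$ for $s>x_\lambda+\delta_1$, with \emph{no} exponential factor. The exponential lives in $w_2$, which does not appear in $\cJ_W$. The fix is easy and is what the paper does: on $I_3$ the same substitution $s=x_\lambda t$ gives
\[
\int_{x_\lambda+\delta_1}^\infty \frac{|W_2(s)|}{(s^\beta-\lambda)^{1/2}}\,\dd s
\;\ls\;
\lambda^{(2\gamma+2-\beta)/(2\beta)}\int_{1}^\infty \frac{t^\gamma}{(t^\beta-1)^{1/2}}\,\dd t,
\]
and the last integral converges precisely when $\gamma-\beta/2<-1$, i.e.\ $\beta>2\gamma+2$. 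So $I_3$ contributes the \emph{same} order as $I_1$, not something exponentially small; the hypothesis $\beta>2\gamma+2$ is needed here as well. (A minor side remark: your comparison of the $I_2$ and $I_1$ exponents actually holds for all $\beta>0$, not just $\gamma>-2$.)
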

\begin{proof}
For all large $\la>0$, we get (let $\supp W_1 \subset [-x_1,x_1]$)
\begin{equation}
\int_0^\infty \frac{W_1(s)}{w_1(s)^2} \, \dd s  = \int_0^{x_1} \frac{W_1(s)}{(\la-s^\beta)^\frac12} \, \dd s
\ls \frac{\|W_1\|_{L^1}}{\la^\frac 12}.
\end{equation}
For $\beta>1$ and $W_1\in L^1(\R)$ without the condition on $\supp W_1$, one can use \eqref{V.xla} and \eqref{delta.est} to obtain
\begin{equation}
\int_0^\infty \frac{W_1(s)}{w_1(s)^2} \, \dd s  
\ls 
\frac1{a_\la^\frac13} \int_0^{\infty}  W_1(s) \, \dd s
\ls \la^\frac{1-\beta}{3\beta} \|W_1\|_{L^1}.
\end{equation}

Next, changing the integration variable $s = x_\la t$ and using \eqref{delta.est}, we get (with the assumption $\beta > 2\gamma+2$)
\begin{equation}
\begin{aligned}
\int_0^\infty \frac{W_2(s)}{w_1(s)^2} \, \dd s &\ls 
\int_0^1 \frac{W_2(s)}{w_1(s)^2} \, \dd s +
\frac{x_\la^{1+\gamma}}{\la^\frac 12}\int_0^{\infty} \frac{t^\gamma \, \dd t}{|1-t^\beta|^\frac12} + \frac{x_\la^\gamma(\delta+\delta_1)}{a_\la^\frac13} 
\\
& \ls \la^{-\frac12}+\la^\frac{2 \gamma+2-\beta}{2 \beta} + \la^\frac{3 \gamma-2-2\beta}{3 \beta} 
\ls \la^{-\frac12} + \la^\frac{2 \gamma+2-\beta}{2 \beta}.
\end{aligned}
\end{equation}
\end{proof}
Conditions on $W$ in Proposition~\ref{prop:W}, in particular $\beta>2\gamma+2$ or $W \in L^1(\R)$ when $\beta>1$, arise also in \cite{Mityagin-2019-139,Krejcirik-2019-276}, where the Riesz basis property of eigenfunctions, eigenvalue asymptotics and resolvent estimates are analyzed for \emph{complex} $W$.

Our main result reads as follows. 
\begin{theorem}\label{thm:lim.EF}
Let $Q =V + W$ where $V$ and $W$ satisfy Assumptions~\ref{asm:V} and \ref{asm:W}, respectively. Let $V$ satisfy in addition the condition \eqref{asm:V*} and let $\{\mu_k\}$, $\mu_*$ be as in \eqref{nu.k.def}, \eqref{lim.intro}, respectively. Let
\begin{equation}\label{f.growth}
\cF_V := \{ f \in L^\infty_{\rm loc}(\R) \,: \, \exists M \geq 0, \ f \exp (-M |V|^\frac12) \in L^\infty(\R)    \}. 
\end{equation}

Then, for every $f \in \cF_V$, we have

\begin{equation}\label{lim.nu.k}
\lim_{k \to \infty} \int_\R f(x) \, \dd \mu_k (x) =  
\frac{\Gamma(\frac 12 + \frac 1\beta)}{2 \pi^\frac 12 \Gamma(1+\frac 1\beta)} 
\int_{-1}^1  \frac{f(x)}{(1-\omega_\beta(x))^\frac 12}\, \dd x.
\end{equation}

Hence, in particular, the measures $\{\mu_k\}$ converge weakly to the limit measure $\mu_*$  as $k \to \infty$.
\end{theorem}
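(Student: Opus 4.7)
\noindent\emph{Proof plan.} The approach is to substitute the WKB-type asymptotic formulas for $\psi_k^2$ summarized in Section~\ref{subsec:EF.sum} into the definition \eqref{nu.k.def} of $\mu_k$, split the integral into the classically allowed, transition, and forbidden regions, and pass to the limit $k\to\infty$ separately in each piece. Fix a small $\delta\in(0,1)$ and decompose
\begin{equation*}
\int_\R f(x)\,\dd\mu_k(x) = I_k^{\rm osc}(\delta) + I_k^{\rm tp}(\delta) + I_k^{\rm exp}(\delta),
\end{equation*}
corresponding to $|x|<1-\delta$, $1-\delta\leq|x|\leq 1+\delta$, and $|x|>1+\delta$, respectively.

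First, $I_k^{\rm exp}(\delta)$ will be handled using the (super-)exponential decay of $\psi_k$ in the classically forbidden region that is supplied by the asymptotic formulas of Section~\ref{subsec:EF.sum}. Since $f\in\cF_V$ grows at most like $\exp(M|V|^{1/2})$, while $\psi_k^2$ decays like $\exp(-2\int\sqrt{V-\la_k})$ there, Assumption~\ref{asm:V} together with \eqref{V.beh} gives $I_k^{\rm exp}(\delta)=o(1)$ as $k\to\infty$ for each fixed $\delta>0$. The turning-point contribution $I_k^{\rm tp}(\delta)$ will be controlled by the Airy-type uniform bounds of Section~\ref{subsec:EF.sum}: on the strip $1-\delta\leq|x|\leq 1+\delta$ the function $|f|$ is bounded by a constant depending only on $\delta$, and the total $\mu_k$-mass of that strip is $\BigO(\delta^{1/2})$ uniformly in $k$ (consistent with the $(1-\omega_\beta)^{-1/2}$ endpoint singularity of $\mu_*$), so $I_k^{\rm tp}(\delta)\to 0$ as $\delta\to 0^+$ uniformly in $k$.

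For the bulk oscillatory region, the WKB formula yields, after the rescaling $y=x_{\la_k}x$, a leading expression for $x_{\la_k}\psi_k(x_{\la_k}x)^2$ of the form
\begin{equation*}
\frac{c_k}{\pi\sqrt{1-V(x_{\la_k}x)/\la_k}}\bigl(1+\cos(2\theta_k(x_{\la_k}x))\bigr) + \text{l.o.t.},
\end{equation*}
where $c_k\approx x_{\la_k}/\sqrt{\la_k}$ and $\theta_k'(y)=\sqrt{\la_k-V(y)}$ is a rapidly varying phase. Condition \eqref{asm:V*} with $\la_k=V(x_{\la_k})$ gives the pointwise convergence $V(x_{\la_k}x)/\la_k\to\omega_\beta(x)$ on $(-1,1)$, hence the non-oscillatory term converges to a multiple of $f(x)(1-\omega_\beta(x))^{-1/2}$. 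The oscillatory contribution is killed by a Riemann--Lebesgue-type argument: one integrates by parts against $\e^{\pm 2\ii\theta_k}$ after approximating $f$ by smooth compactly supported functions on $(-1+\delta,1-\delta)$, where $|\theta_k'|\gs\sqrt{\la_k}$. The normalization $\|\psi_k\|_{L^2}^2=1$ then pins down the prefactor via a Bohr--Sommerfeld-type identity, and the beta-function evaluation $\int_0^1(1-x^\beta)^{-1/2}\,\dd x=\beta^{-1}B(1/\beta,1/2)$ produces the constant $\Gamma(1/2+1/\beta)/\bigl(2\sqrt\pi\,\Gamma(1+1/\beta)\bigr)$. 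The perturbation $W$ is absorbed through Assumption~\ref{asm:W} and the integral $\cJ_W(\la)$ from \eqref{JW.def}, which enters the WKB asymptotics as a multiplicative $(1+o(1))$ factor.

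The main obstacle I anticipate is making the Riemann--Lebesgue cancellation uniform near the turning points, where $\theta_k'(y)=\sqrt{\la_k-V(y)}$ degenerates and naive integration by parts against the phase fails. Matching the Airy regime at $|y-x_{\la_k}|\lesssim a_{\la_k}^{-1/3}$ with the WKB regime further inside, at the correct rate $\delta=\delta(k)\to 0$, and verifying that the resulting contributions combine so that precisely the endpoint singularity $(1-\omega_\beta(x))^{-1/2}$ of $\mu_*$ is recovered, is the delicate technical step; this is where the explicit constant of \eqref{lim.intro} crystallizes.
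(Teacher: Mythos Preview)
Your plan is essentially the paper's own argument: insert the asymptotic formulas of Section~\ref{subsec:EF.sum} into \eqref{nu.k.def}, split the integral into oscillatory, transition, and forbidden parts, extract the main term by \eqref{asm:V*} plus dominated convergence, kill $\sin 2\zeta$ by smooth approximation and integration by parts, and read off the constant from the normalisation \eqref{y.norm}.

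The one organisational difference is your use of a \emph{fixed} auxiliary parameter $\delta\in(0,1)$ followed by a double limit $k\to\infty$, then $\delta\to0$. The paper instead works from the outset with the intrinsic turning-point scale $\delta\approx a_{\la_k}^{-1/3}$ coming from \eqref{delta.def}--\eqref{delta.est}, together with the weight functions $w_1,w_2$ of \eqref{w12.def} that encode the Airy, oscillatory and exponentially decaying regimes simultaneously via the uniform bounds \eqref{|u|.|v|.est}. This dissolves precisely the obstacle you flag in your last paragraph: there is no separate matching step, because the region $[x_{\la}-\delta,x_{\la}+\delta_1]$ has $\mu_k$-mass $\ls \la^{1/2}/(x_{\la}a_{\la}^{2/3})=o(1)$ directly from \eqref{|u|.|v|.est} and \eqref{delta.est}. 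For the forbidden region the paper also needs the explicit comparison \eqref{V.zeta} between $|V(x/x_{\la})|^{1/2}$ and $|\zeta(x)|$ to absorb the growth allowed for $f\in\cF_V$; your sketch asserts this step but the inequality requires the argument around \eqref{f.exp.1}--\eqref{I3.est}. Finally, the smooth approximation the paper uses for the $\sin 2\zeta$ cancellation is an $L^1$-approximation of $f$ on $(0,1)$ combined with an $\eps$-dependent inner cutoff at $x_{\la}-\eps x_{\la}^{-\nu}$, not a compactly supported approximation on $(-1+\delta,1-\delta)$; either works, but the paper's version avoids having to let $\delta\to0$ afterwards.
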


\subsection{Distribution of zeros}

We remark that the related result on the number of zeros of the eigenfunction $\psi_k$ in $[-\eps x_{\la_k}, \eps x_{\la_k}]$, $\eps \in (0,1]$, denoted by $N_k(\eps x_{\la_k})$, is
\begin{equation}\label{lim.N-intro}
\lim_{k \to \infty} 
\frac{N_k( \eps x_{\la_k} )}{k}
= \frac{\Gamma(\frac 32 + \frac 1\beta)}{\pi^\frac 12 \Gamma(1+\frac 1\beta)}  \int_{-\eps}^\eps (1-\omega_\beta(x))^\frac12 \, \dd x, \qquad \eps \in (0,1].
\end{equation}
This generalizes the classical results for the harmonic oscillator, \ie~$Q(x)=x^2$, namely the semi-circle law for the limiting distribution of the number of zeros of Hermite functions, 
\begin{equation}
\lim_{k \to \infty} 
\frac{N_k( \eps \sqrt{2k+1})}{k}
= \frac{2}{\pi}  \int_{-\eps}^\eps \sqrt{1-x^2} \, \dd x, \qquad \eps \in (0,1], 
\end{equation}
see \eg~\cite{Rakhmanov-1982-119,Gawronski-1987-50,Kuijlaars-1999-99}. A generalization of \eqref{lim.N-intro} for polynomial, possibly complex, potentials has been given in \cite{Eremenko-2008-8}. 

The distribution of zeros of eigenfunctions $\psi_k$, see \eqref{lim.N-intro}, is closely related to the distribution of eigenvalues of $A$ and it is essentially proved in \cite[Sec.~7]{Titchmarsh-1962-book1}. Indeed, without the perturbation $W$, \ie~$W=0$, the eigenvalues of $A$ satisfy
\begin{equation}\label{EV.asym}
\frac{\pi^\frac 12 \Gamma(1+\frac 1\beta)}{\Gamma(\frac 32 + \frac 1\beta)} x_{\la_k} \la_k^\frac 12  = \pi k(1+o(1)), \qquad k \to \infty,
\end{equation}
see \cite[Sec.~7]{Titchmarsh-1962-book1}, \cite[Thm.~2]{Giertz-1964-14}, so \eqref{lim.N-intro} follows from \cite[Lem.~7.3, Thm.~7.4]{Titchmarsh-1962-book1}. To include $W$, one could check that \eqref{EV.asym} remains valid for $V+W$, \eg~like in \cite[Thm.~6.6]{Mityagin-2019-139}, and adjust the arguments in \cite[Sec.~7]{Titchmarsh-1962-book1}. Alternatively, one can use the asymptotic formulas for $\{\psi_k\}$ and $\{\psi_k'\}$ in Section~\ref{subsec:EF.sum}; the latter can be 	derived by differentiating \eqref{y.int.eq}. The zeros of $\psi_k$ for $|x|<x_{\la_k}$ are in a neighborhood of the zeros of
\begin{equation}
J_\frac 13(\zeta(x)) + J_{-\frac 13}(\zeta(x)), \quad  \zeta(x)=\int_x^{x_{\la_k}} (\la - V(s))^\frac12 \, \dd s,
\end{equation}
and, for large $\zeta$, using asymptotic formulas for Bessel functions, see \cite[\S10.17]{DLMF}, these are in a neighborhood of zeros of
\begin{equation}
\sin \left(\zeta(x) + \frac \pi4 \right), \qquad |x|<x_{\la_k}.
\end{equation}

\section{The proofs}
\label{sec:proofs}

We start with an implication of the condition \eqref{asm:V*} for integrals frequently appearing in our analysis and proceed with the proof of Theorem~\ref{thm:lim.EF}.

\begin{lemma}\label{lem:Om.beta}
Let $V$ satisfy Assumption~\ref{asm:V} and the condition~\eqref{asm:V*}. Then, for every $g \in L^\infty((-1,1))$, 
\begin{equation}\label{Omega.lim}
\begin{aligned}
\lim_{t\to +\infty}  \int_{-1}^1 \left( 1 - \frac{V(x t)}{V(t)} \right)^{\frac12} g(x) \, \dd x
&= 
\int_{-1}^1 \left( 1 - \omega_\beta(x) \right)^{\frac12} g(x) \, \dd x,
\\
\lim_{t\to +\infty}  \int_{-1}^1 \left( 1 - \frac{V(x t)}{V(t)} \right)^{-\frac12} g(x) \, \dd x
& = 
\int_{-1}^1 \left( 1 - \omega_\beta(x) \right)^{-\frac12} g(x) \, \dd x.
\end{aligned}
\end{equation}
\end{lemma}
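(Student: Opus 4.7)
The plan is to derive both limits from the pointwise convergence $V(xt)/V(t) \to \omega_\beta(x)$ provided by \eqref{asm:V*} via dominated convergence, with a two-scale splitting required to treat the endpoint singularity of the $(-\tfrac12)$-integrand. As a setup, the evenness of $V$ reduces both integrals to $(0,1)$, and for $t$ large enough the monotonicity of $V$ on $[\xi_0,\infty)$ together with its boundedness on $[-\xi_0,\xi_0]$ ensures $V(xt) \in [0,V(t)]$ uniformly in $x \in [-1,1]$, so all integrands are well-defined and non-negative.

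The first identity is immediate: $(1 - V(xt)/V(t))^{1/2}|g(x)| \le \|g\|_{L^\infty}$ is a constant integrable dominant on $(-1,1)$, and dominated convergence concludes.

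For the second identity I would split according to the value of $\nu$ in Assumption~\ref{asm:V}, using Proposition~\ref{prop:nu2}. When $\nu = -1$ (so $\beta \in (0,\infty)$), applying \eqref{nu.xt.ineq.new} on $[xt,t]$ gives a constant $c_1>0$, independent of $t$, with $V(xt)/V(t) \le x^{c_1}$ whenever $xt \ge \xi_0$; on the shrinking residual set $(0,\xi_0/t)$ the ratio is uniformly $O(1/V(t))$ since $V$ is bounded on $[-\xi_0,\xi_0]$. Combined, these produce the $t$-independent integrable dominant $C(1-x^{c_1})^{-1/2}$ on $(0,1)$, and dominated convergence finishes the case. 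When $\nu > -1$ (so $\beta = \infty$ and $\omega_\beta \equiv 0$), I would split the integral into $(0,1-\delta)$ and $(1-\delta,1)$, let $t \to \infty$ first and then $\delta \to 0$. On the inner part \eqref{nu.xt.ineq.new} yields $V(xt)/V(t) \to 0$ uniformly, so the limit is $\int_0^{1-\delta} g$. On the outer part, using \eqref{V.asm} together with monotonicity of $V$ I would obtain $V(t)-V(xt) \gs V(xt)\, t^{\nu+1}(1-x^{\nu+1})$, whence $1 - V(xt)/V(t) \gs \min(1,\, t^{\nu+1}(1-x))$; integrating its reciprocal square root against a bounded $g$ gives a bound of order $\delta + t^{-(\nu+1)}$. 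Since $\nu+1>0$ the $t$-piece vanishes, the remaining $O(\delta)$ is matched by the $O(\delta)$ tail $\int_{1-\delta}^1 g$ of the target, and sending $\delta \to 0$ completes the proof.

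The main obstacle is the $\nu > -1$ subcase of the second identity: the integrand $(1 - V(xt)/V(t))^{-1/2}$ concentrates onto a $t$-dependent window of width $\sim t^{-(\nu+1)}$ near $x = \pm 1$, so no fixed integrable dominant exists. The quantitative endpoint lower bound on $1 - V(xt)/V(t)$ extracted from \eqref{V.asm} and monotonicity is what powers the iterated limit $\lim_{\delta \to 0}\lim_{t \to \infty}$, and choosing the dichotomy so that both the shrinking-window and the bulk contributions close up cleanly is the delicate part.
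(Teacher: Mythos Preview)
Your argument is correct, and for the first identity and the $\nu=-1$ subcase of the second it coincides with the paper's proof. The difference lies in the $\nu>-1$ subcase of the second identity.

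You assert that ``no fixed integrable dominant exists'' and therefore run an iterated limit $\lim_{\delta\to 0}\lim_{t\to\infty}$ with the quantitative endpoint bound $1-V(xt)/V(t)\gs\min(1,t^{\nu+1}(1-x))$. This works, but the premise is false: the paper exhibits a $t$-independent integrable dominant and applies dominated convergence directly. Concretely, from \eqref{nu.xt.ineq.new} one has $V(xt)/V(t)\le U(xt)/U(t)$ with $U(x)=\exp(cx^{\nu+1})$ for $x\in[\tfrac12,1)$ and $t\ge 2\xi_0$, and the paper shows the elementary inequality
\[
1-\frac{U(xt)}{U(t)}=1-e^{-ct^{\nu+1}(1-x^{\nu+1})}\ge 1-x^{\nu+1}
\]
for all $x\in[\tfrac12,1)$ once $t$ is large enough (uniformly in $x$); this reduces to checking $e^{sy}(1-y)\ge 1$ for $y\in[0,y_0]$ with $y_0<1$ and $s$ large. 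Hence $(1-V(xt)/V(t))^{-1/2}\le(1-x^{\nu+1})^{-1/2}$ on $[\tfrac12,1)$, which is integrable. Your iterated-limit argument reaches the same conclusion but at the cost of an extra decomposition and a two-step limit; the paper's route is shorter and keeps the proof uniform across the two values of $\nu$. (A minor point: your written chain ``$V(t)-V(xt)\gs V(xt)t^{\nu+1}(1-x^{\nu+1})$, whence $1-V(xt)/V(t)\gs\min(1,t^{\nu+1}(1-x))$'' hides a dichotomy on whether $V(xt)/V(t)\le\tfrac12$; it is cleaner to write $1-V(xt)/V(t)\ge 1-e^{-c't^{\nu+1}(1-x^{\nu+1})}$ and use $1-e^{-z}\gs\min(z,1)$.)
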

\begin{proof}
Both statements follow by \eqref{asm:V*} and the dominated convergence theorem. Since $V$ is even, it suffices to consider the integrals on $(0,1)$ only. 

First let $x \in [0,1/2]$ and let $\xi_0>0$ be as in Assumption~\ref{asm:V}. 
Since $V \in C(\R)$ and $V(y)$ is positive and increasing for $y \geq \xi_0$, see \eqref{V.pos}, we get that 
\begin{equation}\label{V.U.-1}
\begin{aligned}
\frac{V(x t)}{V(t)} &\leq \frac{\max_{0 \leq y \leq \xi_0}|V(y)| +  \max_{\xi_0 \leq y \leq \frac t2} V(y) } {V(t)}
\\
&
\leq 
\frac{V(\frac t2) } {V(t)} \left(1+ \frac{\max_{0 \leq y \leq \xi_0}|V(y)|}{V(\frac t2)} \right), \qquad t \geq 2\xi_0.
\end{aligned}
\end{equation}
Thus \eqref{V.lim} and \eqref{asm:V*} imply that there exists $\eps_0>0$ such that for all $x \in [0,1/2]$ and all $t>t_0$ with $t_0 \geq 2 \xi_0$ (independent of $x$) we have
\begin{equation}\label{V.U.-1.b}
\frac{V(x t)}{V(t)} \leq 1-\eps_0.
\end{equation}
Combining \eqref{V.U.-1.b} and the assumption that $V$ is eventually increasing on $\R_+$, see \eqref{V.pos}, we have that  $V(xt) \leq V(t)$ for all $x \in [0,1]$ and all $t>t_0$. Thus the existence of an integrable bound in the first limit follows.

For the second limit, we use inequalities \eqref{nu.xt.ineq.old}. These imply in particular that there is a constant $c>0$ (depending only on $\nu$) such that for all $x \in [1/2,1)$ and all $t \geq 2\xi_0$
\begin{equation}\label{V.U.ineq}
\frac{V(x t)}{V(t)} \leq \frac{U(xt)}{U(t)}, \quad \text{where} \quad U(x):=\begin{cases}
x^c, & \nu = -1,
\\[2mm]
\exp \left(c x^{\nu+1}\right), & \nu > -1.
\end{cases}
\end{equation}

For $\nu=-1$, combing \eqref{V.U.-1.b} and \eqref{V.U.ineq} for $x \in [\frac 12,1)$, we arrive at the integrable bound
\begin{equation}
\left(1 - \frac{V(x t)}{V(t)} \right)^{-\frac12} |g(x)| \leq
\begin{cases}
\eps_0^{-\frac12} |g(x)|, & x \in [0,\frac 12),
\\[2mm]
(1 - x^c)^{-\frac12} |g(x)|, & x \in [\frac 12,1). 
\end{cases}
\end{equation}
For $\nu>-1$, we show that for all $x \in [\frac 12,1]$ and all sufficiently large $t \geq 2\xi_0$ (independently of $x$) 
\begin{equation}\label{U.ineq}
1-\frac{U(xt)}{U(t)} \geq 1-x^{\nu+1}.
\end{equation}
To see this, we introduce $y=1-x^{\nu+1} \in [0,y_0]$ with $y_0=1-(1/2)^{\nu+1}<1$ and  $s=ct^{\nu+1}$. Then \eqref{U.ineq} holds if 
\begin{equation}\label{ey.ineq}
e^{sy}(1-y)-1 \geq 0
\end{equation}
for all $y \in [0,y_0]$ and all large $s>0$ (independently of $y$). Since $e^{sy} \geq 1+sy$, we get
\begin{equation}
e^{sy}(1-y)-1 \geq y(s(1-y)-1),
\end{equation}
thus \eqref{ey.ineq} holds if
\begin{equation}
s \geq \frac{1}{1-y_0}.
\end{equation}

Hence the sought integrable bound reads
\begin{equation}
\left(1 - \frac{V(x t)}{V(t)} \right)^{-\frac12} |g(x)| \leq
\begin{cases}
\eps_0^{-\frac12} |g(x)|, & x \in [0,\frac 12),
\\[2mm]
(1 - x^{\nu+1})^{-\frac12} |g(x)|, & x \in [\frac 12,1). 
\end{cases}
\end{equation}

\end{proof}

\subsection{Summary of properties of eigenfunctions of Schr\"odinger operators}
\label{subsec:EF.sum}

We summarize properties eigenfunctions of Schr\"odinger operators with even single-well potentials $Q=V+W$ satisfying Assumptions~\ref{asm:V} and \ref{asm:W}. The details and proofs are given in Section~\ref{sec:EF.proofs}; this slightly extends the reasoning in \cite[\S22.27]{Titchmarsh-1958-book2} and \cite{Giertz-1964-14}.

%\subsection{Asymptotics of eigenfunctions}

Since $Q$ is an even function by assumption, we can restrict ourselves to $(0,+\infty)$. Following the notations of \cite{Giertz-1964-14}, we introduce (for enough large $\la >0$)
\begin{equation}\label{u.def}
\begin{aligned}
V(x_\lambda) &= \lambda, \quad (x_\la>0) \\
a_\lambda & =V'(x_\lambda),
\\
\zeta &= \zeta(x,\la)= 
\begin{cases}
\displaystyle
\int_x^{x_\la} (\la - V(s))^\frac12 \, \dd s, & 0<x<x_\la, 
\\[4mm]
\displaystyle
\ii \int_{x_\la}^x (V(s)-\la)^\frac12 \, \dd s, & x > x_\la,
\end{cases}
\\
b&=b(x,\la)=\left(\frac{\zeta}{\zeta'} \right)^{\frac 12}, \quad \text{where} \quad 
\arg b = 
\begin{cases}
0, & x > x_\la,
\\[2mm]
\displaystyle
\frac \pi 2, & x < x_\la,
\end{cases}
\\
u & = u(x,\lambda) = b K_{\frac 13}(-\ii \zeta),
\\
v & = v(x,\lambda) = b I_{\frac 13}(-\ii \zeta); 
\end{aligned}
\end{equation}
here $K_{1/3}$, $I_{1/3}$ are modified Bessel functions of order $1/3$. 
Furthermore, we define 
\begin{equation}\label{kappa.def}
\kappa_\la := \int_{x_\la}^\infty \left(\frac{|V''(t)|}{V(t)^\frac 32} 
+ \frac{V'(t)}{V(t)^\frac 52} \right)\, \dd t.
\end{equation}

The functions $u$ and $v$ are known to be two linearly independent solutions of the differential equation
\begin{equation}
-f''+(V-\la) f = Kf,
\end{equation}
where 
\begin{equation}\label{K.def}
K=K(x,\la) = - \left(
\frac{b''}{b} + \frac{1}{9 b^4}
\right)
= \frac 14
\left(
\frac 59 \frac{\la-V}{\zeta^2} - \frac{V''}{\la-V} - \frac54 \frac{V'^2}{(\la-V)^2}
\right);
\end{equation}
moreover, the Wronskian of $u$ and $v$ satisfies
\begin{equation}\label{uv.Wron}
W[u,v](x)=u(x)v'(x)-v(x)u'(x)=1.
\end{equation}
The $L^2$-solution of Schr\"odinger equation $-y''+Qy=\la y$ is then found by solving the integral equation (obtained by variation of constants)

\begin{equation}\label{y.int.eq.new}
y(x) = u(x) + \int_x^\infty G(x,s) (K(s) + W(s)) y(s) \, \dd s,
\end{equation}
where $G(x,s) = u(x) v(s) - v(x) u(s)$, see Theorem~\ref{thm:EF.asym} and its proof in Section~\ref{sec:EF.proofs}.

Next, for $0\leq x < x_\la$, one gets 
\begin{equation}\label{u.Bessel}
u(x) = \frac{\pi}{\sqrt 3} \left| b \right|
\left(
J_\frac 13(\zeta) + J_{-\frac 13}(\zeta)
\right), \qquad v(x) = -|b| J_\frac 13(\zeta).
\end{equation}
The positive numbers $\delta$ and $\delta_1$ are defined by
\begin{equation}\label{delta.def}
\zeta(x_\lambda-\delta) = - \ii \zeta(x_\lambda + \delta_1) = 1
\end{equation}
and they satisfy
\begin{equation}\label{delta.est}
\delta + \delta_1 = o(x_\la^{-\nu}), \qquad 
\delta \approx \delta_1 \approx a_\la^{-\frac13}, 
\quad \la \to +\infty,
\end{equation}
see Lemma~\ref{lem:delta} and its proof for details.
As $\la \to +\infty$, we have
\begin{equation}\label{V.xla}
V(x_\la)-V(x_\la-\delta) \approx a_\la \delta \approx a_\la^{\frac 23}, 
\quad
V(x_\la+\delta_1)-V(x_\la) \approx a_\la \delta_1 \approx a_\la^{\frac 23},
\end{equation}
see~Lemma~\ref{lem:delta} below.

If $|x| < x_\lambda$ stays away from turning points, $\zeta$ is large and so it is useful to employ asymptotic formulas for Bessel functions with large argument, see \cite[\S10.17]{DLMF}.
%in detail 
%%
%\begin{equation}\label{J.asym}
%\begin{aligned}
%J_{\pm \frac 13} (t) &=  
%\left(
%\frac{2}{\pi t}
%\right)^\frac12
%\left(
%\cos \left(t-\frac \pi4 \mp \frac\pi6 \right) + \BigO \left(\frac 1t \right)
%\right),
%\\
%J_{\pm \frac 13}' (t) &=  
%-\left(
%\frac{2}{\pi t}
%\right)^\frac12
%\left(
%\sin \left(t-\frac \pi4 \mp \frac\pi6 \right) + \BigO \left(\frac 1t \right)
%\right),
%\quad t \to + \infty.
%\end{aligned}
%\end{equation}
%
In particular, one obtains
\begin{equation}\label{u^2.exp}
u^2(x) = \frac{\pi}{(\la-V(x))^\frac12} (1 + \sin 2 \zeta + R_1(\zeta)), \quad |x| < x_\lambda,
\end{equation}
where (see also \cite[Sec.~7]{Giertz-1964-14})
\begin{equation}\label{R1.est}
|R_1(\zeta)| = \BigO(\zeta^{-1}), \qquad \zeta \to + \infty.
\end{equation}

For the absolute values of $u$ and $v$, we have that, for all large enough $\la >0$,  
\begin{equation}\label{|u|.|v|.est}
|u(x)| \ls \big(w_1(x) w_2(x) \big)^{-1}, \qquad |v(x)| \ls  w_1(x)^{-1} w_2(x),  \qquad x >0,
\end{equation}
with the weights 
\begin{equation}\label{w12.def}
\begin{aligned}
w_1(x) &= 
\begin{cases}
|\la - V(x)|^{\frac14}, &  x \in (0,x_\la - \delta) \cup (x_\la + \delta_1, \infty),  \\[1mm]
a_\la^\frac16, &x \in [x_\la - \delta, x_\la + \delta_1], 
\end{cases}
\\
w_2(x) &=
\begin{cases}
1, &  x \in (0,x_\la + \delta_1],   \\[1mm]
e^{-\ii \zeta}, &x \in (x_\la + \delta_1,\infty),
\end{cases}
\end{aligned}
\end{equation}
see Lemma~\ref{lem:w12} below. Notice that $\arg \zeta(x) = \pi/2$ for $x > x_\lambda$ thus $|u(x)|$ is exponentially decreasing while $|v(x)|$ is allowed to be exponentially increasing as $x \to +\infty$. 

Next, from Assumption~\ref{asm:V} we obtain the following estimates, frequently occurring in our statements and proofs. 
\begin{lemma}	\label{lem:VV'}
	Let $V$ satisfy Assumption~\ref{asm:V} and let $x_\la$ and $a_\la$ be as in \eqref{u.def}. 
	Then, as $\la \to + \infty$,
\begin{equation}\label{kappa.lem}
\begin{aligned}
	\left(  
	\frac{x_\la^{2\nu}}{\la}
	\right)^\frac12 
	&\approx 
	\left(  
	\frac{x_\la^{3\nu}}{a_\la}
	\right)^\frac12 
	\approx
	\frac{V'(x_\la)}{V(x_\la)^\frac32} 
	\\
	&
	\ls \kappa_\la = \int_{x_\la}^\infty \left(\frac{|V''(t)|}{V(t)^\frac 32} + \frac{V'(t)^2}{V(t)^\frac 52} \right) \, \dd t = o(1), \qquad \la \to + \infty.
\end{aligned}
\end{equation}
\end{lemma}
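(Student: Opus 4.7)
First, the chain of $\approx$ equivalences follows directly from the definitions $\la = V(x_\la)$ and $a_\la = V'(x_\la)$ in \eqref{u.def} together with the relation $V'(x) \approx V(x) x^\nu$ from \eqref{V.asm}, applied at $x = x_\la$ (legitimate for all sufficiently large $\la$, since $x_\la \to +\infty$ and $V' \in C^2$ on $\{x \geq \xi_0\}$). A direct calculation gives
\begin{equation*}
\frac{V'(x_\la)}{V(x_\la)^\frac32} = \frac{a_\la}{\la^\frac32} \approx \frac{x_\la^\nu}{\la^\frac12} = \left(\frac{x_\la^{2\nu}}{\la}\right)^\frac12,
\qquad
\left(\frac{x_\la^{3\nu}}{a_\la}\right)^\frac12 \approx \left(\frac{x_\la^{3\nu}}{\la x_\la^\nu}\right)^\frac12 = \left(\frac{x_\la^{2\nu}}{\la}\right)^\frac12.
\end{equation*}

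To establish $V'(x_\la)/V(x_\la)^{3/2} \ls \kappa_\la$, I would integrate the identity
\begin{equation*}
-\frac{\dd}{\dd t}\left(\frac{V'(t)}{V(t)^\frac32}\right) = \frac{3}{2}\frac{V'(t)^2}{V(t)^\frac52} - \frac{V''(t)}{V(t)^\frac32}
\end{equation*}
from $x_\la$ to $+\infty$. The crucial point is that the boundary term at $+\infty$ vanishes: by \eqref{V.asm} one has $V'(x)/V(x)^{3/2} \approx x^\nu/V(x)^{1/2}$, and the two-sided growth bounds \eqref{V.beh} (polynomial with positive exponent when $\nu=-1$, exponential when $\nu>-1$) force $x^\nu/V(x)^{1/2} \to 0$ as $x \to +\infty$. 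Moving the absolute value inside the integrand then yields
\begin{equation*}
\frac{V'(x_\la)}{V(x_\la)^\frac32} \leq \int_{x_\la}^\infty \left(\frac{3}{2}\frac{V'(t)^2}{V(t)^\frac52} + \frac{|V''(t)|}{V(t)^\frac32}\right) \dd t \ls \kappa_\la.
\end{equation*}

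Finally, $\kappa_\la = o(1)$ as $\la \to +\infty$ is immediate from \eqref{V.int}: both integrands $V'^2/V^{5/2}$ and $|V''|/V^{3/2}$ belong to $L^1((\xi_0,\infty))$ and $x_\la \to +\infty$, so the tail contribution vanishes. I do not foresee any serious obstacle here; the only step that requires genuine care is the justification that the boundary term at $+\infty$ in the second step is zero, and this is entirely a matter of invoking the explicit growth bounds \eqref{V.beh} already derived from Assumption~\ref{asm:V}.
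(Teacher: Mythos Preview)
Your proof is correct and follows essentially the same route as the paper's: the $\approx$ chain comes directly from $V'(x)\approx V(x)x^\nu$ at $x=x_\la$, and the inequality $V'(x_\la)/V(x_\la)^{3/2}\ls\kappa_\la$ is obtained exactly as in the paper by writing $V'(x_\la)/V(x_\la)^{3/2}=-\int_{x_\la}^\infty (V'/V^{3/2})'\,\dd t$ and then bounding the integrand in absolute value. The only difference is cosmetic: you justify the vanishing of the boundary term at $+\infty$ via the explicit growth bounds \eqref{V.beh}, whereas the paper simply invokes \eqref{V.int} (from which the vanishing also follows, since the integrability of $(V'/V^{3/2})'$ forces the limit to exist, and a nonzero limit would contradict $V'^2/V^{5/2}\in L^1$).
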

\begin{proof}
	The claims follow from $V'(x) \approx V(x) x^\nu$ for $x$ sufficiently large, see \eqref{V.asm}, and 
	\begin{equation}
	\frac{V'(x_\la)}{V(x_\la)^\frac32} = - \int_{x_\la}^\infty \left( \frac{V'(t)}{V(t)^\frac32}\right)' \, \dd t 
	\end{equation}	
	together with \eqref{V.int}.
\end{proof}

Finally, we have that
\begin{equation}\label{u.norm}
\begin{aligned}
\int_0^{\infty} u(x)^2 \, \dd x 
&= 
\left(\int_0^{x_\la} \frac{\pi \, \dd x}{(\la - V(x))^\frac12} \right)
\left(
1 +  
\BigO
\left(\frac1{x_\la} + 
\left(\frac{x_\la^{3\nu}}{a_\la}\right)^\frac16   \frac{\log \frac{a_\la}{x_\la^{3\nu}}}{x_\la^{1+\nu}}  
\right)
\right)
\\
&= \left(\int_0^{x_\la} \frac{\pi \, \dd x}{(\la - V(x))^\frac12}\right) (1+o(1)), \quad \la \to +\infty,
\end{aligned}
\end{equation}	
see Lemma~\ref{lem:u^2} below.

The following theorem shows that the function $u$ is the main term in the asymptotic formula for eigenfunctions of the operator $A$ from \eqref{aho.def}. The proof is given at the end of Section~\ref{sec:EF.proofs}. One can check that the eigenvalues of $A$ are simple and eigenfunctions are even or odd functions (since $Q$ is assumed to be even). Thus the eigenvalues and eigenfunctions of $A$ can be found by determining $\la>0$ for which solutions $y$ in \eqref{yu.rel.ap} of the differential equation \eqref{y.ode} satisfy a Dirichlet ($y(0)=0$) or a Neumann ($y'(0)=0$) boundary condition at $0$.

\begin{theorem}\label{thm:EF.asym}
Let $Q=V+W$ where $V$ and $W$ satisfy Assumptions~\ref{asm:V} and \ref{asm:W}, respectively. Let $x_\lambda$ and $u$ be as in \eqref{u.def}, let $w_1$, $w_2$ be as in \eqref{w12.def}, let $\kappa_\la$ as in \eqref{kappa.def} and let $\cJ_W$ be as in \eqref{JW.def}. Then, for every sufficiently large $\la>0$, there is a solution of 
	\begin{equation}\label{y.ode}
	-y'' + (Q-\lambda) y =0
	\end{equation}
	on $(0,+\infty)$ such that
	\begin{equation}\label{yu.rel.ap}
	y = u + r,
	\end{equation}
	where
	\begin{equation}\label{r.est}
	|r(x)| \leq \frac{C(\la)}{w_1(x) w_2(x)} ,  \qquad x >0, 
	\end{equation}
	and 
	\begin{equation}
	C(\la) = \BigO(\la^{-\frac12} + \kappa_\la + \cJ_W(\la)) = o(1), \quad  \la \to + \infty.
	\end{equation}
	Moreover
	\begin{equation}\label{y.norm}
	\begin{aligned}
	&\int_0^{\infty} y^2(x) \, \dd x 
	\\
	& \qquad= 
	\left(\int_0^{x_\la} \frac{\pi \, \dd x}{(\la - V(x))^\frac12} \right)
	\left(
	1  
	+ C(\la)
	+
	\BigO
	\left(\frac1{x_\la}+
	\left(\frac{x_\la^{3\nu}}{a_\la}\right)^\frac16   \frac{\log \frac{a_\la}{x_\la^{3\nu}}}{x_\la^{1+\nu}}  
	\right)
	\right)
	\\
	& \qquad= 
	\left(\int_0^{x_\la} \frac{\pi \, \dd x}{(\la - V(x))^\frac12} \right)
	\left(
	1 + o(1)
	\right), \qquad \la \to +\infty. 
	\end{aligned}
	\end{equation}
\end{theorem}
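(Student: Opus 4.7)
The plan is to construct $y$ by Banach's fixed point theorem applied to the Volterra integral equation \eqref{y.int.eq.new}, which arises via variation of parameters: since $u,v$ are independent solutions of $(-\dd^2/\dd x^2+V-\la-K)f=0$ with unit Wronskian \eqref{uv.Wron}, rewriting $(-\dd^2/\dd x^2+Q-\la)y=0$ as an inhomogeneous problem with right-hand side $-(K+W)y$ yields the desired equation. Once the fixed point is constructed with the claimed weighted bound, the $L^2$-asymptotic follows by inserting $y^2=u^2+2ur+r^2$ into \eqref{u.norm}.

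Let $X$ be the Banach space of measurable $f:(0,\infty)\to\C$ with norm $\|f\|_*:=\sup_{x>0}w_1(x)|w_2(x)||f(x)|$. By \eqref{|u|.|v|.est}, $\|u\|_*=\BigO(1)$, so $u\in X$. Define
\begin{equation*}
(Tf)(x):=\int_x^\infty G(x,s)(K(s)+W(s))f(s)\,\dd s,\qquad G(x,s):=u(x)v(s)-v(x)u(s).
\end{equation*}
Using $|G(x,s)|\leq|u(x)||v(s)|+|v(x)||u(s)|$, the bounds \eqref{|u|.|v|.est}, and the observation that $|w_2|$ is non-decreasing on $(0,\infty)$ (it equals $1$ on $(0,x_\la+\delta_1]$, and $|e^{-\ii\zeta}|$, which is real and increasing, on $(x_\la+\delta_1,\infty)$), one checks that for $s\geq x$
\begin{equation*}
w_1(x)|w_2(x)|\cdot|G(x,s)|\cdot\frac{1}{w_1(s)|w_2(s)|}\;\lesssim\;\frac{1}{w_1(s)^2}.
\end{equation*}
Hence $\|Tf\|_*\lesssim\|f\|_*\bigl(\cJ_W(\la)+\int_0^\infty|K(s)|w_1(s)^{-2}\,\dd s\bigr)$. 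For the $K$-integral, insert the explicit form \eqref{K.def} and split into the classical region $(0,x_\la-\delta)$, the turning strip $[x_\la-\delta,x_\la+\delta_1]$, and the forbidden region $(x_\la+\delta_1,\infty)$. On the classical side, after substituting $s=x_\la t$, the dominant pieces $V''/(\la-V)^{3/2}$ and $V'^2/(\la-V)^{5/2}$ are controlled by $\kappa_\la$ together with a $\la^{-1/2}$ remainder (using Lemma~\ref{lem:VV'}). On the turning strip of length $\BigO(a_\la^{-1/3})$, the Airy/Bessel regime gives $w_1^2\approx a_\la^{1/3}$ and, by \eqref{delta.est}--\eqref{V.xla}, the integrand has the same order. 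The forbidden region contributes an exponentially suppressed tail via $|w_2(s)|^{-2}$. This yields $\|T\|_{X\to X}=\BigO(\la^{-1/2}+\kappa_\la+\cJ_W(\la))=o(1)$. For all large $\la$, the Neumann series converges; the unique fixed point $y=u+(I-T)^{-1}Tu\in X$ satisfies \eqref{y.ode} by construction, and $r:=y-u$ obeys $\|r\|_*\leq (1-\|T\|)^{-1}\|T\|\,\|u\|_*$, which is \eqref{r.est}.

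For the $L^2$-identity, expand $y^2=u^2+2ur+r^2$; by \eqref{|u|.|v|.est} and \eqref{r.est}, the cross and quadratic terms are bounded by $C(\la)\,w_1^{-2}|w_2|^{-2}$ pointwise. A region-by-region check shows
\begin{equation*}
\int_0^\infty\frac{\dd x}{w_1(x)^2|w_2(x)|^2}=\Bigl(\int_0^{x_\la}\frac{\dd x}{(\la-V(x))^{1/2}}\Bigr)(1+o(1)),
\end{equation*}
since the classical part furnishes the main term, the turning window of width $\BigO(a_\la^{-1/3})$ with weight $a_\la^{-1/3}$ is lower order, and the forbidden tail is exponentially small. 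Combining this with \eqref{u.norm} produces \eqref{y.norm}. The main obstacle throughout is the careful handling of the turning strip $[x_\la-\delta,x_\la+\delta_1]$: away from it the semiclassical bounds are standard, but within it the Bessel-type representation \eqref{u.Bessel} must be used together with \eqref{delta.est}--\eqref{V.xla} from Lemma~\ref{lem:delta} to verify that the short-interval contributions to the Volterra estimate and the norm identity really are of the advertised order.
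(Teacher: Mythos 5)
Your overall architecture is the same as the paper's: variation of constants leading to the Volterra equation \eqref{y.int.eq.new}, a contraction/Neumann-series argument in the weighted space with norm $\sup_x w_1|w_2||f|$ (identical to the paper's passage to $\hat f = w_1w_2 f$ and solving in $L^\infty(\R_+)$), the kernel bound using the monotonicity of $|w_2|$, and the norm identity via $y^2=u^2+2ur+r^2$ together with $\int_0^\infty w_1^{-2}|w_2|^{-2}\,\dd x$. However, there is a genuine gap at the crucial technical step, namely the proof that $\int_0^\infty |K(s)|\,w_1(s)^{-2}\,\dd s = \BigO(\la^{-1/2}+\kappa_\la)$ (the paper's Lemma~\ref{lem:J}). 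You propose to ``insert the explicit form \eqref{K.def}'' and estimate term by term, but the three terms in \eqref{K.def} are each non-integrably singular at the turning point: as $s\uparrow x_\la$ one has $\zeta\approx \tfrac23(\la-V)^{3/2}/a_\la$, so $\tfrac59\tfrac{\la-V}{\zeta^2}\approx\tfrac54\tfrac{a_\la^2}{(\la-V)^2}\approx (x_\la-s)^{-2}$, and likewise $\tfrac54\tfrac{V'^2}{(\la-V)^2}\approx(x_\la-s)^{-2}$, while $\tfrac{V''}{\la-V}$ diverges logarithmically. A triangle-inequality bound on the strip $[x_\la-\delta,x_\la+\delta_1]$ is therefore infinite, and even on your ``classical'' region $(0,x_\la-\delta)$ the term-by-term estimate of $\int V'^2(\la-V)^{-5/2}$ up to $x_\la-\delta$ with $\delta\approx a_\la^{-1/3}$ produces a contribution of order $a_\la^{-1/2}\delta^{-3/2}\approx 1$, not $o(1)$. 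Your assertion that these pieces are ``controlled by $\kappa_\la$'' cannot work either, since $\kappa_\la$ only involves integrals over $(x_\la,\infty)$ where the denominators are $V\geq\la$ and never vanish.

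What saves the day — and what the paper's Lemma~\ref{lem:J} does — is the cancellation built into $K=-(b''/b+\tfrac1{9b^4})$: the leading singularities of $\tfrac59\tfrac{\la-V}{\zeta^2}$ and $\tfrac54\tfrac{V'^2}{(\la-V)^2}$ cancel, because $b=(\zeta/\zeta')^{1/2}$ is regular across the turning point. Exhibiting this requires the finer analysis near $x_\la$: the split must be made at distance $\delta_\la'=\eps_1 x_\la^{-\nu}$ (not at $\delta\approx a_\la^{-1/3}$), one expands $\zeta=\tfrac23\tfrac{(\la-V)^{3/2}}{V'}\bigl(1-\tfrac25\tfrac{(\la-V)V''}{V'^2}-T\bigr)$ by integrating by parts twice, estimates $T$ using \eqref{V.asm} and \eqref{V.Delta.new}, chooses $\eps_1$ small so that Taylor's theorem for $\zeta^{-2}$ applies, and only after the cancellation obtains $|K|/w_1^2\ls x_\la^{2\nu}(\la-V)^{-1/2}$ on $[x_\la-\delta_\la',x_\la]$, whose integral is $(x_\la^{3\nu}/a_\la)^{1/2}=o(1)$ by Lemma~\ref{lem:VV'}. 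Without this cancellation argument your bound on the operator norm of $T$ does not close, and with it the remainder of your proof (the fixed point, the bound \eqref{r.est}, and the norm computation \eqref{y.norm}) goes through exactly as in the paper.
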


\subsection{Proof of Theorem~\ref{thm:lim.EF}}

Since the eigenfunctions $\{\psi_k\}$ are even or odd, we consider only $x \in (0,\infty)$. We select the eigenfunctions $\{\psi_k\}$ such that
\begin{equation}
\psi_k(x) = \frac{y_k(x)}{\|y_k\|}  = \frac{u_k(x) + r_k(x)}{\|y_k\|} , \quad x>0,
\end{equation}
where $y_k=y(\cdot,\la_k)$, $u_k=u(\cdot,\la_k)$ and $r_k=y_k-u_k$, see Section~\ref{subsec:EF.sum} and in particular Theorem~\ref{thm:EF.asym}. Hence, the densities $\{\phi_k\}$ of the measures $\{\mu_k\}$, see \eqref{nu.k.def}, satisfy
\begin{equation}
\begin{aligned}
\phi_k(x) &= x_{\la_k} \, \psi_k(x_{\la_k} x)^2 
\\
&= x_{\la_k} \frac{u_k(x_{\la_k} x)^2 + 2 r_k(x_{\la_k} x) u_k(x_{\la_k} x ) + r_k(x_{\la_k} x)^2}{\|y_k\|^2}. 
\end{aligned}
\end{equation}

In the sequel, notations and results summarized in Section~\ref{subsec:EF.sum} are used, moreover, we introduce the constant (for $\beta \in (0,\infty]$)
\begin{equation}\label{Omega.def}
\begin{aligned}
%\Omega_\beta&:=\int_{-1}^1 (1-|t|^\beta)^{\frac12} \, \dd t = \frac{\pi^\frac 12 \Gamma(1+\frac 1\beta)}{\Gamma(\frac 32 + \frac 1\beta)},
%\\
\Omega'_\beta&:=\int_{-1}^1 (1-|t|^\beta)^{-\frac12} \, \dd t = \frac{2 \pi^\frac 12 \Gamma(1+\frac 1\beta)}{\Gamma(\frac 12 + \frac 1\beta)}.
\end{aligned}
\end{equation} 
% 
%notice that $\Omega_\infty = \Omega'_\infty=2$, $\Omega_2 = \pi/2$ and $\Omega'_2 = \pi$.
We also drop the subscript $k$ and work with quantities like $y=y(\cdot,\la)$ as $\la \to +\infty$.

First, Lemma~\ref{lem:Om.beta}, \eqref{y.norm} and the change of integration variables $x=x_{\la} t$ imply
\begin{equation}
\|y\|^2 = 2 \left(\int_0^{x_{\la}} \frac{\pi \, \dd x}{(\la - V(x))^\frac12} \right)
( 1 + o(1))
= \frac{\pi \Omega_\beta'  x_{\la}}{\la^{\frac12}}  (1+o(1))
, \quad \la \to +\infty.
\end{equation}

Thus with $f \in \cF_V$, see \eqref{f.growth}, and the change of integration variables, we get 
\begin{equation}\label{phik.f.int}
\int_0^\infty  \phi(x) f(x) \, \dd x = \frac{1}{\pi \Omega_\beta'} \frac{\la^\frac12}{x_{\la}} \left(\int_0^\infty y(x)^2 f \left(\frac x{x_{\la}} \right) \; \dd x \right)
(1+o(1)), \ \ \la \to +\infty;
\end{equation}
the integral indeed converges for $f \in \cF_V$ as can be seen from \eqref{f.exp.1}, \eqref{V.zeta} below and the behavior of $y$ at infinity, see \eqref{yu.rel.ap}, \eqref{r.est}, \eqref{|u|.|v|.est} and \eqref{w12.def}.

First we show that the contribution from the region around the turning point is negligible. It follows from \eqref{delta.est} and \eqref{kappa.lem} that
\begin{equation}
\frac{\delta_1}{x_\la} \approx 
\left(
\frac{x_\la^{3\nu}}{a_\la} 
\right)^\frac13
\frac{1}{x_\la^{\nu+1}} = o(1), \quad \la \to +\infty, 
\end{equation}
hence, since $f \in L^\infty_{\rm loc}(\R)$,
\begin{equation}\label{f.sup}
\esssup_{0 \leq x \leq x_\la+\delta_1} \left|f \left(\frac x{x_{\la}} \right) \right| = \BigO(1), \quad \la \to +\infty.
\end{equation}
Employing estimates \eqref{|u|.|v|.est}, \eqref{r.est}, \eqref{f.sup} and \eqref{delta.est} in the last step, we obtain
\begin{equation}
\begin{aligned}
\cI_1:=\frac{\la^\frac12}{x_{\la}} \int_{x_{\la}-\delta}^{x_{\la}+\delta_1} y(x)^2 \left|f \left(\frac x{x_{\la}} \right)\right| \; \dd x 
\ls
\frac{\la^\frac12}{x_{\la}} \frac{(1 + C(\la)^2)(\delta + \delta_1)}{a_{\la}^\frac13} 
\ls \frac{\la^\frac12}{x_{\la} a_{\la}^{\frac 23}}. 
\end{aligned}
\end{equation}

Similarly, since $x_\la^{-\nu} \leq x_\la$ and $\delta_1 = o(x_\la^{-\nu})$ as $\la \to +\infty$, see \eqref{delta.est}, we get (using \eqref{|u|.|v|.est}, \eqref{V.xla} and changing the integration variables $-\ii \zeta(x) = |\zeta(x)|=t$)
\begin{equation}
\begin{aligned}
\cI_2&:=\frac{\la^\frac12}{x_{\la}} \int_{x_{\la}+\delta_1}^{x_{\la}+\frac{x_{\la}^{-\nu}}2} y(x)^2 \left|f \left(\frac x{x_{\la}} \right)\right| \; \dd x 
\\
&
\ls
\frac{\la^\frac12}{x_{\la}}
 \int_{x_{\la}+\delta_1}^{x_{\la}+\frac{x_{\la}^{-\nu}}2} 
\frac{(1+C(\la)^2)e^{-2 |\zeta(x)|}}{(V(x) - \la)^\frac 12} \; \dd x
%\\
%&
\ls
\frac{\la^\frac12}{x_{\la} a_\la^\frac23}
\int_{1}^{\infty} 
e^{-2t} \; \dd t
\ls \frac{\la^\frac12}{x_{\la} a_{\la}^{\frac 23}}. 
\end{aligned}
\end{equation}

We investigate the region $(x_{\la}+x_{\la}^{-\nu}/2,\infty)$ and also explain the convergence of the integral in \eqref{phik.f.int}. To this end, we recall that by assumption $f \in \cF_V$, see \eqref{f.growth}, thus
with some $M>0$
\begin{equation}\label{f.exp.1}
\begin{aligned}
&\left|f \left(\frac x{x_{\la}} \right)\right| \exp(-|\zeta(x)|)
\\
&\quad \leq \|f \exp(-M|V|^\frac12)\|_{L^\infty} \exp \left(-|\zeta(x)|\Big(1- M \frac{
	\big|V(\frac{x}{x_\la})\big|^\frac12}{|\zeta(x)|} \Big) \right)
\end{aligned}
\end{equation}
and we show below that
\begin{equation}\label{V.zeta}
\sup_{x>x_{\la}+\frac12 x_{\la}^{-\nu}}\frac{\left|V\left(\frac{x}{x_\la}\right)\right|^\frac12}{|\zeta(x)|} = o(1), \qquad \la \to +\infty.
\end{equation}
To prove \eqref{V.zeta}, notice that for $x > x_\la$ and assuming that $\lambda$ is sufficiently large that $x_\lambda > \xi_0$
\begin{equation}
\left(
\frac{V(x)-\la}{V(x)}
\right)' = 
\frac{\la V'(x)}{V(x)^2} >0
\end{equation}
and, using \eqref{V.asm} and \eqref{V.Delta.new},
\begin{equation}
\frac{V(x_\la+\frac12 x_\la^{-\nu}) -V(x_\la)}{V(x_\la+\frac12 x_\la^{-\nu})} 
\approx 
\frac{V'(x_\la) x_\la^{-\nu}}{V(x_\la)} 
\approx 1.
\end{equation}
Thus, for $x>x_{\la}+x_{\la}^{-\nu}/2$,
\begin{equation}
\begin{aligned}
|\zeta(x)| &= \int_{x_\la}^x (V(t)-\la)^\frac 12 \, \dd t 
= \int_{x_\la}^x \frac{V'(t)}{V'(t)} (V(t)-\la)^\frac 12  \, \dd t 
\gs \frac{(V(x)-\la)^\frac32}{\max_{x_\la \leq t \leq x} V'(t)} 
\\
&= 
\frac{(V(x)-\la)^\frac32}{V(x)^\frac32} \frac{V(x)^\frac32}{\max_{x_\la \leq t \leq x} V'(t)} 
\gs
\min\{x_\la^{-\nu},x^{-\nu}\} V(x)^\frac12.
\end{aligned}
\end{equation}
Hence for $\nu<0$ we immediately arrive at
\begin{equation}
\frac{\left|V\left(\frac{x}{x_\la}\right)\right|^\frac12}{|\zeta(x)|} 
\ls 
\frac{\left|V\left(\frac{x}{x_\la}\right)\right|^\frac12}{V(x)^\frac 12 x_\la^{|\nu|}} \leq \frac{1}{x_\la^{|\nu|}}.
\end{equation}
For $\nu \geq 0$, we use \eqref{nu.xt.ineq.new} to get (with $\xi_0>0$ from Assumption~\ref{asm:V} and some $c>0$)
\begin{equation}
\begin{aligned}
\frac{\left|V\left(\frac{x}{x_\la}\right)\right|^\frac12}{|\zeta(x)|} 
&\ls 
\frac{x^{\nu} \left|V\left(\frac{x}{x_\la}\right)\right|^\frac12}{V(x)^\frac 12} 
\\
&\ls
\max_{x_\la \leq x \leq \xi_0 x_\la}\left(\frac{x^{2\nu}}{V(x)}\right)^\frac12
+ x^{\nu} \exp \left(
-c x^{\nu+1}(1+ \BigO(x_\la^{-\nu-1})) \right),
\end{aligned}
\end{equation}
thus \eqref{V.zeta} follows also in this case (recall \eqref{kappa.lem}).

As a consequence of \eqref{f.exp.1} and \eqref{V.zeta} we obtain in particular that
\begin{equation}\label{f.V.2}
\esssup_{x \geq x_\la+\frac12 x_\la^{-\nu}} \left|f \left(\frac x{x_{\la}} \right)\right| \exp(-|\zeta(x)|) = \BigO(1), \quad \la \to +\infty
\end{equation}
which we use in the estimate of integral
\begin{equation}
\cI_3:=\frac{\la^\frac12}{x_{\la}} \int_{x_{\la}+\frac12 x_\la^{-\nu}}^{\infty} y(x)^2 \left|f \left(\frac x{x_{\la}} \right)\right| \; \dd x. 
\end{equation}
In detail, employing \eqref{f.V.2}, \eqref{|u|.|v|.est}, \eqref{r.est}, changing the integration variables $-\ii \zeta(x) = |\zeta(x)|=t$ and using \eqref{V.Delta.new} and \eqref{V.asm} in the last steps, we get
\begin{equation}\label{I3.est}
\begin{aligned}
\cI_3& \ls 
\frac{\la^\frac12}{x_{\la}} \int_{x_{\la}+ \frac12 x_\la^{-\nu}}^{\infty} \frac{(1+C(\la)^2)e^{-| \zeta(x)|}}{(V(x) - \la)^\frac 12} \; \dd x
\\
&\ls 
\frac{\la^\frac12}{x_{\la}} \frac{1}{V(x_{\la}+\frac12 x_\la^{-\nu}) - V(x_{\la})} \int_0^\infty e^{-t} \; \dd t
%\\
%&
\ls \frac{\la^\frac12}{x_{\la}} \frac{1}{V'(x_{\la}) x_\la^{-\nu}} 
\ls \frac{1}{x_{\la} \la^{\frac 12}}.
\end{aligned}
\end{equation}

Thus in summary, using \eqref{V.asm}, \eqref{kappa.lem} and $\nu \geq-1$, we get
\begin{equation}
\cI_1 + \cI_2 + \cI_3 \ls \left( \frac{x_\la^{3\nu}}{a_\la}\right)^\frac 16 \frac{1}{x_\la^{1+\nu}} +\frac{1}{x_{\la} \la^{\frac 12}} = o(1), \quad \la \to +\infty.
\end{equation}

We continue with the integral over $(0,x_\la-\delta)$, see \eqref{phik.f.int}, where we use the representation of $u^2$ from \eqref{u^2.exp}, \ie~
\begin{equation}\label{y2.dec}
y^2=  \frac{\pi}{(\la-V)^\frac12} (1 + \sin 2 \zeta + R_1(\zeta)) + 2 u r + r^2.
\end{equation}
The main contribution in \eqref{phik.f.int} reads (employing Lemma~\ref{lem:Om.beta})
\begin{equation}
\begin{aligned}
\cI_4&:=\frac{\la^\frac12}{x_{\la}} \int_{0}^{x_{\la}-\delta} \frac{\pi f \left(\frac{x}{x_{\la}}\right)}{(\la-V(x))^\frac12}  
 \; \dd x 
= 
\pi \int_0^{1-\frac{\delta}{x_{\la}}} \left(1-\frac{V(x_{\la} x)}{V(x_{\la})}\right)^{-\frac12}   
f(x) \; \dd x 
\\
&= 
\int_0^1 \frac{\pi f(x) \, \dd x}{(1-\omega_\beta(x))^\frac12}    
+ o(1), \quad \la \to +\infty.
\end{aligned}
\end{equation}
Thus, to prove \eqref{lim.nu.k}, we need to show that the remaining terms are negligible. 

Employing the estimates on $|u|$, $|r|$, see \eqref{|u|.|v|.est}, \eqref{r.est}, we get by changing the integration variables $x=x_{\la} t$ and applying Lemma~\ref{lem:Om.beta} that (recall that $f \in L^\infty_{\rm loc}(\R)$)
\begin{equation}
\begin{aligned}
\cI_5&:=\frac{\la^\frac12}{x_{\la}} \int_{0}^{x_{\la}-\delta} \left(|u(x)| |r(x)| + r(x)^2 \right)  \left|f \left(\frac x{x_{\la}} \right)\right| \; \dd x
\\
& \ls \frac{\la^\frac12}{x_{\la}} \int_{0}^{x_{\la}-\delta} \frac{C(\la)+C(\la)^2}{(\la-V(x))^\frac12} \; \dd x
\ls \Omega_\beta'  C(\la) = o(1), \quad \la \to +\infty. 
\end{aligned}
\end{equation}
Thus the contribution from the integrals with $2 u r+r^2$ is indeed negligible.

Using \eqref{R1.est}, \eqref{zeta.est2}, \eqref{V.xla.lem}, \eqref{V.asm} and \eqref{kappa.lem}, we obtain (recall that $f \in L^\infty_{\rm loc}(\R)$, $-\zeta' = (\la-V)^\frac12$ and see also \eqref{R1.est.lem})
\begin{equation}
\begin{aligned}
\cI_6&:=\frac{\la^\frac12}{x_{\la}} \int_{0}^{x_{\la}-\delta} \frac{|R_1(\zeta)|}{(\la - V(x))^\frac12} 
  \left|f \left(\frac x{x_{\la}} \right)\right|
\; \dd x
\\
&\ls
\frac{\la^\frac12}{x_{\la}} 
\left(
\frac{1}{\zeta(x_\la- \frac 12 x_\la^{-\nu})} \int_0^{x_\la-\frac 12 x_\la^{-\nu}} \frac{\dd x }{(\la-V(x))^\frac12}  
\right.
\\ & \qquad \qquad \left.
+ \int_{x_\la- \frac12 x_\la^{-\nu}}^{x_\la-\delta} \frac{\dd x }{\zeta(x)(\la-V(x))^\frac12}
\right)
\\
&\ls 
\frac{\la^\frac12}{x_{\la}} \left(
\left(\frac{x_\la^{3\nu}}{a_\la}\right)^\frac12 \int_0^{x_\la-\frac 12 x_\la^{-\nu}} \frac{\dd x }{(\la-V(x))^\frac12}  
+ \frac{\log \zeta(x_\la- \frac12 x_\la^{-\nu})}{\la-V(x_\la-\delta)}  
\right)
\\
& \ls \left(\frac{x_\la^{3\nu}}{a_\la}\right)^\frac12 \Omega_\beta'
+ \left( \frac{x_\la^{3\nu}}{a_\la}\right)^\frac 16 \frac{\log \frac{a_\la}{x_\la^{3\nu}} }{x_\la^{1+\nu}} = o(1), \quad \la \to +\infty. 
\end{aligned}
\end{equation}
Finally, we analyze the term with $\sin 2 \zeta$, see \eqref{y2.dec}. For every $\eps>0$ there is $g \in C_0^\infty((0,1))$ such that $\|f-g\|_{L^1((0,1))}<\eps$. With this $\eps>0$, we define $\delta_\eps:=\eps x_\la^{-\nu}$; notice that $\delta = o(\delta_\eps)$ as $\la \to +\infty$, see \eqref{delta.est}. Then
\begin{equation}\label{sin.dens}
\begin{aligned}
&\frac{\la^\frac12}{x_{\la}}
\left|
\int_0^{x_{\la}-\delta} \frac{\sin 2 \zeta(x)}{(\la-V(x))^\frac12} f\left(\frac{x}{x_{\la}}\right) \, \dd x
\right|
\\
& \quad \leq 
\frac{\la^\frac12}{x_{\la}}
\int_{x_\la-\delta_\eps}^{x_\la-\delta} \frac{1}{(\la-V(x))^\frac12} \left|f\left(\frac{x}{x_{\la}}\right) \right|  \, \dd x
\\
& \qquad
+ \la^\frac12
\int_0^{1-\frac{\delta_\eps}{x_\la}} \frac{|f(t)-g(t)|}{(\la-V(x_\la t))^\frac12}   \, \dd t
\\ & \qquad+
\frac{\la^\frac12}{x_{\la}}
\left|
\int_0^{x_{\la}-\delta_\eps} \frac{\sin 2 \zeta(x)}{(\la-V(x))^\frac12} g\left(\frac{x}{x_{\la}}\right) \, \dd x
\right|
\\
& \quad =: \cI_8 + \cI_9 + \cI_{10}.
\end{aligned}
\end{equation}
Using that $f \in L^\infty_{\rm loc}(\R)$, \eqref{V.Delta.new} and \eqref{V.asm}
\begin{equation}
\cI_8 \ls \frac{\la^\frac12}{x_{\la}} \frac{(V(x_\la)- V(x_\la-\delta_\eps))^\frac12}{V'(x_\la)}
\ls \eps^\frac12 \frac{\la^\frac12}{x_{\la}} \frac{(V'(x_\la) x_\la^{-\nu})^\frac12}{V'(x_\la)}
\ls  \frac{\eps^\frac12}{x_\la^{1+\nu}}.
\end{equation}
From $\|f-g\|_{L^1((0,1))}<\eps$, \eqref{V.Delta.new} and \eqref{V.asm}, we get
\begin{equation}
\cI_9 \ls \eps \frac{\la^\frac12}{(V(x_\la)-V(x_\la-\delta_\eps))^\frac12 }
\ls \eps \frac{\la^\frac12}{(V'(x_\la) \eps x_\la^{-\nu})^\frac12 } 
\ls\eps^\frac12.
\end{equation}
By integration by parts and \eqref{V.xla}, 
\begin{equation}\label{sin.est}
\begin{aligned}
\cI_{10}
&\ls 
\frac{\la^\frac12}{x_{\la}}
\left(
 \left|
\left[ g\left(\frac{x}{x_{\la}}\right) \frac{\cos 2 \zeta(x)}{\la-V(x)} \right]_0^{x_{\la}-\delta_\eps}
\right|
+
 \int_0^{x_{\la}-\delta_\eps} \left|\left(  \frac{g\left(\frac{x}{x_{\la}}\right)}{\la-V(x)}\right)' \right| \, \dd x
\right)
\\
& 
\ls
\frac{\la^\frac12}{x_{\la}} \left(
\frac{\|g\|_{L^\infty}}{\eps \la}  + \int_0^{x_{\la}-\delta_\eps} \frac{\|g'\|_{L^\infty}}{x_{\la}(\la -V(x))} + \frac{\|g\|_{L^\infty} V'(x)}{(\la -V(x))^2} \, \dd x
\right)
\\
& \ls \frac{\|g\|_{L^\infty}+\|g'\|_{L^\infty}}{\eps x_{\la} \la^\frac12}. 
\end{aligned}
\end{equation}

Putting the estimates from above together, we finally obtain
\begin{equation}
\limsup_{\la \to +\infty} \left|\int_0^\infty  \phi(x) f(x) \, \dd x -  \frac{1}{\Omega_\beta'} \int_0^1 \frac{f(x) \, \dd x}{(1-\omega_\beta(x))^\frac12} \right|
\ls \eps^\frac12,
\end{equation}
thus the claim \eqref{lim.nu.k} follows since $\eps>0$ was arbitrary.
\hfill \qed

\section{Eigenfunctions of Schr\"odinger operators with even single-well potentials}
\label{sec:EF.proofs}

In this section, we collect technical lemmas and proofs of results summarized in Section~\ref{subsec:EF.sum}; these are used in the proof of the main Theorem~\ref{thm:lim.EF}. Notice that in this section we \emph{do not assume} that \eqref{asm:V*} holds. The proofs follow mostly the reasoning in \cite[\S22.27]{Titchmarsh-1958-book2} and \cite{Giertz-1964-14}.

\begin{lemma}\label{lem:delta}
Let $V$ satisfy Assumption~\ref{asm:V}, let $\xi_0$ be as in \eqref{V.pos}, let $x_\la$, $a_\la$, $\zeta$ be as in \eqref{u.def} and $\delta$, $\delta_1$ as in \eqref{delta.def}. Let $\eps \in (0,1)$. Then, for all sufficiently large $\la >0$ and all sufficiently large $x$, the following hold.
\begin{align}
V^{(j)}(x + \Delta) &\approx V^{(j)}(x), \qquad |\Delta| \leq \eps x^{-\nu}, \quad j=0,1,
\label{V.Delta.lem}
\\
|\zeta(x_\la \pm \eps x_\la^{-\nu})| &\approx \left(\frac{a_\la}{x_\la^{3\nu}} \right)^\frac12,
\label{zeta.est}
\\
\label{delta.est.lem}
\delta &\approx \delta_1 \approx a_\la^{-\frac13},
\\
V(x_\la)-V(x_\la-\delta) &\approx a_\la \delta \approx a_\la^{\frac 23}, 
\quad
V(x_\la+\delta_1)-V(x_\la) \approx a_\la \delta_1 \approx a_\la^{\frac 23}.
\label{V.xla.lem}
\end{align}
\end{lemma}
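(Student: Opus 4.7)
The plan is to prove the four estimates \eqref{V.Delta.lem}--\eqref{V.xla.lem} in the stated order, feeding each into the next. The two inputs I rely on are the comparisons recorded in \eqref{V.asm} of Assumption~\ref{asm:V} and the a priori smallness $x_\la^{3\nu}/a_\la = o(1)$ as $\la \to \infty$ supplied by Lemma~\ref{lem:VV'}.

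First I would prove \eqref{V.Delta.lem} for $j=0,1$ by integrating the logarithmic derivative: for $|\Delta| \leq \eps x^{-\nu}$ and $x$ sufficiently large,
\begin{equation*}
\left| \log \frac{V^{(j)}(x+\Delta)}{V^{(j)}(x)} \right| \leq \left|\int_{x}^{x+\Delta} \frac{|V^{(j+1)}(s)|}{V^{(j)}(s)} \, \dd s\right| \ls \left|\int_x^{x+\Delta} s^\nu \, \dd s\right|,
\end{equation*}
using $V'/V \approx s^\nu$ (for $j=0$) and $|V''|/V' \ls s^\nu$ (for $j=1$) from \eqref{V.asm}. The right-hand side is controlled by a constant depending only on $\eps$ and $\nu$ exactly as in the computation \eqref{nu.xt.ineq.new}, which yields \eqref{V.Delta.lem}.

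Next, to obtain \eqref{zeta.est}, I would write $\la - V(s) = \int_s^{x_\la} V'(t) \, \dd t$ on the interval $(x_\la - \eps x_\la^{-\nu}, x_\la)$ and invoke \eqref{V.Delta.lem} with $j=1$ to conclude $V'(t) \approx a_\la$ throughout, hence $\la - V(s) \approx a_\la (x_\la - s)$. Substituting into the definition of $\zeta$ from \eqref{u.def} and changing the variable of integration gives
\begin{equation*}
|\zeta(x_\la - \eps x_\la^{-\nu})| \approx \int_0^{\eps x_\la^{-\nu}} (a_\la t)^{1/2} \, \dd t \approx \bigl(a_\la / x_\la^{3\nu}\bigr)^{1/2}.
\end{equation*}
The $x_\la + \eps x_\la^{-\nu}$ side is handled identically using the definition of $\zeta$ on $(x_\la,\infty)$.

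For \eqref{delta.est.lem} the main obstacle is circularity: the computation $|\zeta(x_\la - \delta)| \approx a_\la^{1/2} \delta^{3/2}$, and its analogue for $\delta_1$, requires $\delta$ to already lie in the range on which \eqref{V.Delta.lem} applies, i.e., $\delta \leq \eps x_\la^{-\nu}$. I break the circularity by exploiting the strict monotonicity of $|\zeta(\cdot,\la)|$ in $x$ on each side of $x_\la$ combined with Lemma~\ref{lem:VV'}: the latter gives $a_\la / x_\la^{3\nu} \to \infty$, so \eqref{zeta.est} forces $|\zeta(x_\la \mp \eps x_\la^{-\nu})| \to \infty$, and monotonicity together with the defining identities $|\zeta(x_\la - \delta)| = |\zeta(x_\la + \delta_1)| = 1$ yields $\delta, \delta_1 < \eps x_\la^{-\nu}$ for all large $\la$. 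Once this bound is in place, repeating the calculation from \eqref{zeta.est} on the shorter interval $(x_\la - \delta, x_\la)$ gives $1 \approx a_\la^{1/2} \delta^{3/2}$, hence $\delta \approx a_\la^{-1/3}$, and analogously $\delta_1 \approx a_\la^{-1/3}$. Finally, \eqref{V.xla.lem} follows immediately: since $\delta, \delta_1 \ls x_\la^{-\nu}$ by \eqref{delta.est.lem} and Lemma~\ref{lem:VV'}, the $j=1$ case of \eqref{V.Delta.lem} gives $V'(\xi) \approx a_\la$ on the relevant intervals, and the mean value theorem combined with \eqref{delta.est.lem} yields $V(x_\la)-V(x_\la-\delta) \approx a_\la \delta \approx a_\la^{2/3}$, with the analogous statement for $+\delta_1$.
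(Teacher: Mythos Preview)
Your proposal is correct and follows essentially the same route as the paper: the logarithmic-derivative bound for \eqref{V.Delta.lem}, the linearization $\la - V(s) \approx a_\la(x_\la - s)$ near the turning point for \eqref{zeta.est}, the monotonicity-plus-$a_\la/x_\la^{3\nu}\to\infty$ argument to break the circularity in \eqref{delta.est.lem}, and the mean value theorem for \eqref{V.xla.lem}. The only cosmetic difference is that for \eqref{zeta.est} the paper multiplies and divides by $V'(t)$ inside the integral to obtain $\zeta \approx a_\la^{-1}(\la - V)^{3/2}$ directly, whereas you linearize $\la - V$ first and then integrate; the two computations are equivalent.
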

\begin{proof}
	Using Assumption~\ref{asm:V}, for $\nu>-1$, we have
	\begin{equation}
	\begin{aligned}
	\left|\log \frac{V(x+\Delta)}{V(x)}\right| 
	&= 
	\left|\int_x^{x+\Delta} \frac{V'(t)}{V(t)} \, \dd t \right|
	\ls
	\left| |x+\Delta|^{\nu +1}-|x|^{\nu +1} \right|
	\\
	&\ls 
	x^{\nu} |\Delta|  + \BigO(|\Delta|^2 x^{\nu-1}),
	\end{aligned}
	\end{equation}
for $\nu=-1$,
\begin{equation}
\begin{aligned}
\left|\log \frac{V(x+\Delta)}{V(x)}\right| 
&= 
\left|\int_x^{x+\Delta} \frac{V'(t)}{V(t)} \, \dd t \right|
\ls
\left| \log \left(1 +  \frac{\Delta}{x} \right) \right|
\\
& \leq \max\{|\log(1-\eps)|, |\log(1+\eps)|\}; 
\end{aligned}
\end{equation}
the case with $j=1$ is similar.
	
Using \eqref{V.Delta.lem} for $V'$ and the mean value theorem in the last step, we get
	\begin{equation}\label{zeta.est2}
	\begin{aligned}
	\zeta(x_\la - \eps x_\la^{-\nu}) &= \int_{x_\la-\eps x_\la^{-\nu} }^{x_\la} \frac{V'(t)}{V'(t)}(\la- V(t))^\frac12 \, \dd t
	\\
	&\approx \frac{1}{a_\la} (V(x_\la)- V(x_\la-\eps x_\la^{-\nu}))^\frac32
	\approx \left(\frac{a_\la}{x_\la^{3\nu}} \right)^\frac12;
	\end{aligned}
	\end{equation}
	the case with $x_\la + \eps x_\la^{-\nu}$ is analogous.
	
	The number $\delta$ must satisfy 
	\begin{equation}\label{delta.oxnu}
	\delta = o(x_\la^{-\nu}), \qquad \la \to +\infty
	\end{equation}
	for otherwise $\zeta(x_\la-\delta ) \to + \infty$ by \eqref{zeta.est} and \eqref{kappa.lem}. Then, using the definition of $\delta$, see \eqref{delta.def}, we get similarly as in \eqref{zeta.est2},
	\begin{equation}\label{zeta.delta}
	1 = \zeta(x_\la - \delta) = \int_{x_\la-\delta }^{x_\la} \frac{V'(t)}{V'(t)}(\la- V(t))^\frac12 \, \dd t
	\approx \frac{1}{a_\la} (a_\la \delta)^\frac32
	\end{equation}
	and thus \eqref{delta.est.lem} follows. The reasoning for $\delta_1$ is analogous.
	
	Relations \eqref{V.xla.lem} follow by the mean value theorem, \eqref{delta.oxnu}, \eqref{V.Delta.lem} and \eqref{delta.est.lem}.
\end{proof}

\begin{lemma}\label{lem:w12}
	Let $V$ satisfy Assumption~\ref{asm:V}, let $u$, $v$ be as in \eqref{u.def} and let $w_1, w_2$ be as in \eqref{w12.def}. Then, for all sufficiently large $\la>0$, we have  
	
	\begin{equation}\label{|u|.|v|.est.lem}
	|u(x)| \ls \big(w_1(x) w_2(x) \big)^{-1}, \qquad |v(x)| \ls  w_1(x)^{-1} w_2(x),  \qquad x >0.
	\end{equation}	
\end{lemma}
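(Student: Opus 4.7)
The plan is to split the positive half-line into three regions determined by the turning point $x_\lambda$ and the widths $\delta, \delta_1$ from \eqref{delta.def}: the oscillatory bulk $(0, x_\lambda - \delta)$, the turning-point zone $[x_\lambda - \delta, x_\lambda + \delta_1]$, and the exponentially decaying tail $(x_\lambda + \delta_1, \infty)$. In each region a different representation of the Bessel factor is natural, and the bounds will follow from standard asymptotics of $J_{\pm 1/3}, I_{1/3}, K_{1/3}$ together with the identity $|b|^2 = |\zeta|/|\zeta'| = |\zeta|/|\lambda - V|^{1/2}$ implied by \eqref{u.def}.

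On the oscillatory bulk I would use the representation \eqref{u.Bessel}, noting that $\zeta(x)\geq 1$ there by \eqref{delta.def}. The large-argument asymptotics $J_{\pm 1/3}(\zeta) = \BigO(\zeta^{-1/2})$ from \cite[\S10.17]{DLMF}, combined with $|b| = \zeta^{1/2}(\lambda-V)^{-1/4}$, immediately give $|u|,|v| \ls (\lambda-V)^{-1/4}$, which is $(w_1 w_2)^{-1}$ with $w_2 \equiv 1$. On the decaying tail, $-\ii\zeta$ is real with $-\ii\zeta \geq 1$, the identity $|b|^2 = |\zeta|(V-\lambda)^{-1/2}$ holds, and the large-argument asymptotics $K_{1/3}(z) \sim \sqrt{\pi/(2z)}\,e^{-z}$, $I_{1/3}(z) \sim (2\pi z)^{-1/2} e^{z}$ yield $|u| \ls (V-\lambda)^{-1/4} e^{-|\zeta|}$ and $|v| \ls (V-\lambda)^{-1/4} e^{|\zeta|}$, matching $(w_1 w_2)^{-1}$ and $w_2/w_1$ respectively since $w_2 = e^{-\ii\zeta} = e^{|\zeta|}$ in this range.

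The delicate region is the turning-point zone, where $J_{-1/3}(\zeta)$ and $K_{1/3}(-\ii\zeta)$ are each singular at $x_\lambda$ and must be balanced against the vanishing of $|b|$. My plan is to feed in the near-turning-point expansion $\lambda - V(x) \approx a_\lambda |x_\lambda - x|$ (coming from \eqref{V.Delta.lem} and \eqref{V.xla.lem}), which yields $|\zeta(x)| \approx \tfrac{2}{3} a_\lambda^{1/2} |x_\lambda - x|^{3/2}$ and $|b(x)|^2 \approx |x_\lambda - x|$; since $\delta + \delta_1 \approx a_\lambda^{-1/3}$ by \eqref{delta.est.lem}, this caps $|b|$ at $a_\lambda^{-1/6}$. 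Combined with the small-argument Bessel asymptotics $J_\nu(z), I_\nu(z) = \BigO(z^\nu)$ and $J_{-\nu}(z), K_{\nu}(z) = \BigO(z^{-\nu})$ for $\nu = 1/3$ and $0 < z \leq 1$, each of the products $|b|\cdot |\zeta|^{\pm 1/3}$ is of order $a_\lambda^{-1/6}$, so $|u|,|v| \ls a_\lambda^{-1/6} = w_1^{-1}$ (with $w_2 \equiv 1$) throughout the zone. The main obstacle is precisely this cancellation of singularities: it is the reason that $\delta, \delta_1$ are defined by the condition $|\zeta| = 1$, which makes the Bessel factors uniformly bounded at the endpoints and ensures that the turning-point bound $a_\lambda^{-1/6}$ matches continuously with the Liouville--Green estimates $(\lambda - V)^{-1/4}$ and $(V-\lambda)^{-1/4}$ coming from the outer regions.
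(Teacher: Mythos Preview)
Your proposal is correct and follows essentially the same approach as the paper: the same three-region split, large-argument Bessel asymptotics in the outer regions, and small-argument asymptotics in the turning-point zone combined with the balance between $|b|$ and $|\zeta|^{\pm 1/3}$. The paper phrases the turning-point estimate slightly differently, writing $|u| \ls (\zeta/|\zeta'|^3)^{1/6}$ and then invoking $\zeta \approx |\zeta'|^3/a_\lambda$, whereas you parametrize everything by $|x_\lambda - x|$; these are the same computation in different notation.
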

\begin{proof}
	For $x \in (0,x_\la-\delta) \cup (x_\la+\delta_1, \infty)$, where $|\zeta|>1$, the inequalities \eqref{|u|.|v|.est.lem} follow from the definitions of $u$ and $v$ and asymptotic expansions of the corresponding Bessel functions for a large argument, see \eg~\cite[Chap.~10]{DLMF}; we omit details. 
	
	In the region around the turning point $x_\la$, one has $|\zeta| \leq 1$ and so expansions of Bessel functions for a small argument are used, see \eg~\cite[Chap.~10]{DLMF}. More precisely, for $u$ and $x_\la-\delta \leq x \leq x_\la$, one has, see \eqref{u.Bessel},
	\begin{equation}
	|u(x)| = \frac{\pi}{\sqrt 3} \left| b \right|
	\left|
	J_\frac 13(\zeta) + J_{-\frac 13}(\zeta)
	\right| \ls  \left(\frac{\zeta}{|\zeta'|^3} \right)^{\frac 16}. 
	\end{equation}
	Similarly as in \eqref{zeta.est2}, we obtain
	\begin{equation}
	\zeta(x) \approx \frac{(\la - V(x))^\frac32}{a_\la}  = \frac{|\zeta'(x)|^3}{a_\la}  ,  \qquad x_\la-\delta \leq x \leq x_\la,
	\end{equation}
	thus $|u(x)| \approx a_\la^{-\frac 16}$. The case $x_\la < x < x_\la + \delta_1$ is similar.
	
	The estimates for $v$ are obtained analogously.
\end{proof}

\begin{lemma}\label{lem:u^2}
	Let $V$ satisfy Assumption~\ref{asm:V} and $u$, $x_\la$ and $a_\la$ be as in \eqref{u.def}. Then
	\begin{equation}\label{u.norm.lem}
	\begin{aligned}
	\int_0^{\infty} u(x)^2 \, \dd x 
	&= 
	\left(\int_0^{x_\la} \frac{\pi \, \dd x}{(\la - V(x))^\frac12}\right) 
	\left(
	1 + \BigO
	\left(\frac{1}{x_\la}+
	\left(\frac{x_\la^{3\nu}}{a_\la}\right)^\frac16   \frac{\log \frac{a_\la}{x_\la^{3\nu}}}{x_\la^{1+\nu}}  
	\right)
	\right)
	\\
	&= \left(\int_0^{x_\la} \frac{\pi \, \dd x}{(\la - V(x))^\frac12} \right) (1+o(1)), \quad \la \to +\infty.
	\end{aligned}
	\end{equation}	
\end{lemma}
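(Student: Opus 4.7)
The plan is to split $\int_0^\infty u(x)^2\,\dd x$ into the three natural regions
\[
I_1 = (0, x_\la - \delta), \qquad I_2 = [x_\la - \delta, x_\la + \delta_1], \qquad I_3 = (x_\la + \delta_1, \infty),
\]
corresponding to the different asymptotic regimes for $u$, and to show that each piece is either the target $\int_0^{x_\la}\pi/(\la - V)^{1/2}\,\dd x$ up to an error of order $a_\la^{-2/3}$, or is itself of order $a_\la^{-2/3}$. On $I_2$, the pointwise bound $|u| \ls a_\la^{-1/6}$ from Lemma~\ref{lem:w12}, combined with $|I_2| = \delta + \delta_1 \approx a_\la^{-1/3}$ from Lemma~\ref{lem:delta}, immediately gives $\int_{I_2} u^2\,\dd x \ls a_\la^{-2/3}$. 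On $I_3$, the estimate $|u|^2 \ls e^{-2|\zeta|}/(V - \la)^{1/2}$ from Lemma~\ref{lem:w12}, the substitution $t = |\zeta(x)|$ (so $\dd t = (V - \la)^{1/2}\,\dd x$), and the lower bound $V - \la \geq V(x_\la + \delta_1) - \la \approx a_\la^{2/3}$ from \eqref{V.xla.lem} together yield $\int_{I_3} u^2\,\dd x \ls a_\la^{-2/3}\int_1^\infty e^{-2t}\,\dd t \ls a_\la^{-2/3}$.

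On the main region $I_1$, I would insert the expansion \eqref{u^2.exp} and treat the three resulting summands separately. The leading piece $\pi/(\la - V)^{1/2}$ reproduces the target $\int_0^{x_\la}\pi/(\la - V)^{1/2}\,\dd x$ minus $\int_{x_\la - \delta}^{x_\la}\pi/(\la - V)^{1/2}\,\dd x$; on this short interval the linearization $\la - V(x) \approx a_\la(x_\la - x)$ (a consequence of \eqref{V.Delta.lem} applied to $V'$) shows the missing piece is $\approx \delta^{1/2}/a_\la^{1/2} \approx a_\la^{-2/3}$. For the oscillatory summand, exploit $\zeta'(x) = -(\la - V)^{1/2}$ to rewrite $\sin 2\zeta/(\la - V)^{1/2} = (2(\la - V))^{-1}\partial_x\cos 2\zeta$ and integrate by parts: the boundary term at $x_\la - \delta$ is $O((\la - V(x_\la - \delta))^{-1}) = O(a_\la^{-2/3})$ by \eqref{V.xla.lem}, the boundary term at $0$ is $O(1/\la)$, and the interior residual $\int V'/(\la - V)^2\,\dd x$ telescopes to $(\la - V(x_\la - \delta))^{-1} - (\la - V(0))^{-1} \ls a_\la^{-2/3}$.

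For the remainder $R_1(\zeta)/(\la - V)^{1/2}$, with $|R_1(\zeta)| \ls 1/\zeta$ from \eqref{R1.est}, I would split $I_1$ at the auxiliary point $x_\la - \tfrac{1}{2}x_\la^{-\nu}$. On the near subinterval $(x_\la - \tfrac{1}{2}x_\la^{-\nu}, x_\la - \delta)$, the linearization together with $\zeta(x) \approx \tfrac{2}{3}a_\la^{1/2}(x_\la - x)^{3/2}$ gives the comparison $\la - V \approx (a_\la \zeta)^{2/3}$, so the substitution $\dd\zeta = -(\la - V)^{1/2}\,\dd x$ reduces the integral to $a_\la^{-2/3}\int_1^{O((a_\la/x_\la^{3\nu})^{1/2})}\zeta^{-5/3}\,\dd\zeta \ls a_\la^{-2/3}$. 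On the far subinterval $(0, x_\la - \tfrac{1}{2}x_\la^{-\nu})$, pull out the lower bound $\zeta \gs (a_\la/x_\la^{3\nu})^{1/2}$ from \eqref{zeta.est} and bound the residual integral by $\int_0^{x_\la}(\la - V)^{-1/2}\,\dd x$; this yields a relative contribution of order $(x_\la^{3\nu}/a_\la)^{1/2}$, which is $\ls (x_\la^{3\nu}/a_\la)^{1/6}\log(a_\la/x_\la^{3\nu})$ since $x_\la^{3\nu}/a_\la = o(1)$ by Lemma~\ref{lem:VV'}.

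Finally, collecting the $O(a_\la^{-2/3})$ contributions and dividing by the main integral $\int_0^{x_\la}\pi/(\la - V)^{1/2}\,\dd x \approx x_\la/\la^{1/2}$, and using $a_\la \approx \la x_\la^\nu$ from \eqref{V.asm}, produces exactly the announced error $\BigO(1/x_\la + (x_\la^{3\nu}/a_\la)^{1/6}\log(a_\la/x_\la^{3\nu})/x_\la^{1+\nu})$; the $o(1)$ assertion then follows from Lemma~\ref{lem:VV'}. The principal obstacle is the accounting: ensuring that the several small contributions (the $1/\la$ boundary term, the $O(a_\la^{-2/3})$ pieces near and beyond the turning point, the telescoping residual from the $\sin 2\zeta$ integration by parts, and the far-field $R_1$ estimate) all combine cleanly into precisely the two-term error as stated rather than into a weaker composite bound.
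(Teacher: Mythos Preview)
Your decomposition into $I_1, I_2, I_3$ and the treatment of each piece track the paper's proof closely; the paper also uses \eqref{u^2.exp} on $(0,x_\la-\delta)$, the pointwise bounds on $[x_\la-\delta,x_\la+\delta_1]$ and the exponential decay beyond, and the same splitting point $x_\la-\tfrac12 x_\la^{-\nu}$ for the $R_1$ integral. Two points deserve comment.

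First, your treatment of the $\sin 2\zeta$ integral has a small gap. After integration by parts the interior residual is $\int_0^{x_\la-\delta} V'(x)\cos 2\zeta(x)\,(\la-V(x))^{-2}\,\dd x$, which you can only dominate by $\int_0^{x_\la-\delta}|V'|(\la-V)^{-2}\,\dd x$; the exact telescoping you invoke requires $V'\geq 0$, which Assumption~\ref{asm:V} guarantees only for $x>\xi_0$. On $(0,\xi_0)$ one has merely $V\in C(\R)\cap C^2(\R\setminus\{0\})$, so $V'$ need not even be integrable near $0$ (take e.g.\ $V(x)=x^3\sin(x^{-3})$ near the origin). The fix is immediate: split off $(0,\xi_0)$ and bound it trivially by $\xi_0\,\la^{-1/2}$, then run your integration by parts on $(\xi_0,x_\la-\delta)$ where $V'>0$. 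This is precisely what generates the $\la^{-1/2}$ contribution responsible for the $1/x_\la$ term in the announced error; the paper obtains the same $\la^{-1/2}+a_\la^{-2/3}$ via the second mean value theorem rather than integration by parts.

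Second, your near--turning--point estimate for the $R_1$ piece is in fact \emph{sharper} than the paper's. Using $\la-V\approx (a_\la\zeta)^{2/3}$ on $[x_\la-\tfrac12 x_\la^{-\nu},x_\la-\delta]$ (which follows from \eqref{zeta.est2}) and the convergent integral $\int_1^\infty \zeta^{-5/3}\,\dd\zeta$, you obtain $O(a_\la^{-2/3})$ with no logarithm, whereas the paper bounds $(\la-V)^{-1}\ls a_\la^{-2/3}$ crudely and picks up $\log\zeta$, giving $a_\la^{-2/3}\log(a_\la/x_\la^{3\nu})$. Your version is correct; combined with the far-field relative error $(x_\la^{3\nu}/a_\la)^{1/2}$ it is still absorbed by the stated bound, so the final claim stands. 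Note also that you only need the lower bound $\int_0^{x_\la}(\la-V)^{-1/2}\,\dd x\gs x_\la\la^{-1/2}$ (which is all the paper proves, since condition~\eqref{asm:V*} is not assumed in this section), not the two--sided $\approx$ you wrote.
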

\begin{proof}
	Using \eqref{u^2.exp}, we obtain
	\begin{equation}
	\begin{aligned}
	\int_0^{\infty} u(x)^2 \, \dd x &=
	\int_0^{x_\la} \frac{\pi}{(\la-V(x))^\frac12} \, \dd x +  
	\pi \int_0^{x_\la-\delta} \frac{\sin 2 \zeta(x) + R_1(\zeta(x))}{(\la-V(x))^\frac12} \, \dd x
	\\
	& \quad + \int_{x_\la-\delta}^{x_\la+\delta_1} u(x)^2 \, \dd x 
	+ \int_{x_\la+\delta_1}^\infty u^2(x) \, \dd x 
	\\
	& \quad
	- \int_{x_\la-\delta}^{x_\la} \frac{\pi}{(\la-V(x))^\frac12} \, \dd x.
	\end{aligned}
	\end{equation}
	First we notice that
	\begin{equation}
	\int_0^{x_\la} \frac{\dd x}{(\la-V(x))^\frac12} 
	= \frac{1}{\la^\frac12}\int_0^{x_\la} \frac{\dd x}{(1-\frac{V(x)}\la)^\frac12} 
	\gs \frac{x_\la}{\la^\frac12}.
	\end{equation}
	Using \eqref{|u|.|v|.est.lem} and \eqref{delta.est.lem}, we get
	\begin{equation}
	\int_{x_\la-\delta}^{x_\la+\delta_1} u(x)^2 \, \dd x \ls a_\la^{-\frac 23}.
	\end{equation}
	Since $\delta \approx a_\la^{-\frac13} = o(x_\la^{-\nu})$ as $\la \to +\infty$, see \eqref{delta.est.lem} and \eqref{delta.oxnu}, using \eqref{V.Delta.lem}, we get
	\begin{equation}
	\int_{x_\la-\delta}^{x_\la} \frac{\dd x}{(\la-V(x))^\frac12} 
	=
	\int_{x_\la-\delta}^{x_\la} \frac{V'(x) \, \dd x}{V'(x)(\la-V(x))^\frac12} 
	\ls 
	\frac{(a_\la \delta)^\frac12}{a_\la} \approx a_\la^{-\frac 23}.
	\end{equation}
	Using \eqref{|u|.|v|.est.lem}, the definition \eqref{delta.def} of $\delta_1$ and \eqref{V.xla.lem}, we have
	\begin{equation}
	\begin{aligned}
	\int_{x_\la+\delta_1}^{\infty} u(x)^2 \, \dd x 
	&\ls
	\int_{x_\la+\delta_1}^{\infty} \frac{e^{-2 \int_{x_\la}^x (V(s)-\la)^\frac12 \, \dd s}}{(V(x)-\la)^\frac12} \, \dd x
	\\
	&
	\ls 
	\frac{1}{V(x_\la+\delta_1 ) - \la} \int_1^\infty e^{-2 t} \, \dd t 
	\ls \frac{1}{a_\la \delta_1 } \approx a_\la^{-\frac 23}.
	\end{aligned}
	\end{equation}
	The second mean value theorem for integrals (from which the point $\xi_1=\xi_1(\la)$ arises below), the fact that $V$ is increasing for $x>\xi_0$ (see \eqref{V.pos}) and \eqref{V.xla.lem} yield (recall that by \eqref{u.def} $-\zeta' = (\la-V)^\frac12$)
	\begin{equation}
	\begin{aligned}
	\left|\int_0^{x_\la-\delta} \frac{\sin 2 \zeta(x) \, \dd x}{(\la-V(x))^\frac12} \right|
	& \ls  \la^{-\frac12} +
	\frac{1}{\la-V(\xi_0)} \left|\int_{\xi_0}^{\xi_1} (- \zeta'(x)) \sin 2 \zeta(x) \, \dd x\right|
	\\ &\quad + 
	\frac{1}{\la-V(x_\la-\delta)} \left| \int_{\xi_1}^{x_\la-\delta} (- \zeta'(x)) \sin 2 \zeta(x) \, \dd x\right|  
	\\
	&\ls \la^{-\frac12} +
	 \frac{1}{a_\la^{\frac 23}} \left| \int_{1}^{\zeta(\xi_1)} \sin 2 t \, \dd t \right| 
	\ls   \la^{-\frac12} + a_\la^{-\frac 23}.
	\end{aligned}
	\end{equation}
	Using \eqref{R1.est}, \eqref{zeta.est2} and \eqref{V.xla.lem}, we have 
	\begin{equation}\label{R1.est.lem}
	\begin{aligned}
	& \int_0^{x_\la-\delta} \frac{|R_1(\zeta(x))|}{(\la-V(x))^\frac12} \, \dd x 
	\\
	& \quad \ls
	\int_0^{x_\la-\delta} \frac{\dd x }{\zeta(x)(\la-V(x))^\frac12} 
	\\
	&\quad \ls
	\frac{1}{\zeta(x_\la- \frac12 {x_\la^{-\nu}})} \int_0^{x_\la-\delta} \frac{\dd x }{(\la-V(x))^\frac12}  
	+ \int_{x_\la- \frac12 {x_\la^{-\nu}}}^{x_\la-\delta} \frac{\dd x }{\zeta(x)(\la-V(x))^\frac12}
	\\ 
	&\quad \ls \left(\frac{x_\la^{3\nu}}{a_\la}\right)^\frac12 \int_0^{x_\la-\delta} \frac{\dd x }{(\la-V(x))^\frac12}  
	+ \frac{\log \zeta(x_\la- \frac12 {x_\la^{-\nu}})}{V(x_\la)-V(x_\la-\delta)}  
	\\
	& \quad \ls \left(\frac{x_\la^{3\nu}}{a_\la}\right)^\frac12 \int_0^{x_\la-\delta} \frac{\dd x }{(\la-V(x))^\frac12}  
	+ \frac{\log \frac{a_\la}{x_\la^{3\nu}} }{ a_\la^{\frac 23}}. 
	\end{aligned}
	\end{equation}
	From \eqref{V.asm} we have
	\begin{equation}
	\frac{\la^\frac12}{x_\la a_\la^\frac23} \approx \left( \frac{x_\la^{3\nu}}{a_\la}\right)^\frac16 \frac{1}{x_\la^{1+\nu}},
	\end{equation}
	thus the claim \eqref{u.norm.lem} follows by putting together  all estimates from above (and \eqref{kappa.lem}).
\end{proof}

\begin{lemma}\label{lem:J}
	Let $V$ satisfy Assumption~\ref{asm:V}, let $K$ be as in \eqref{K.def}, let $w_1$ be as in \eqref{w12.def} and let $\kappa_\la$ be as in \eqref{kappa.def}. Then
	\begin{equation}\label{J.def}
	\cJ_K(\la):= \int_0^\infty \frac{K(s)}{w_1(s)^2} \, \dd s = \BigO(\la^{-\frac12} + \kappa_\la) = o(1), \qquad \la \to +\infty.
	\end{equation} 
\end{lemma}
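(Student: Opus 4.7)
The plan is to split $\cJ_K(\la)$ at the two endpoints $x_\la-\delta$ and $x_\la+\delta_1$ of the transition region, obtaining three pieces $I_1=(0,x_\la-\delta)$, $I_2=[x_\la-\delta,x_\la+\delta_1]$ and $I_3=(x_\la+\delta_1,\infty)$. On the two outer pieces $w_1^{-2}=|\la-V|^{-1/2}$, and the analysis will use the explicit form of $K$ from \eqref{K.def} together with integration by parts; on the middle piece $w_1^{-2}=a_\la^{-1/3}$ and $|I_2|\approx a_\la^{-1/3}$ by Lemma~\ref{lem:delta}, so it suffices to bound $K$ pointwise.

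For the middle piece I would Taylor-expand $V(x_\la+z)=\la+a_\la z+\tfrac{1}{2}V''(x_\la)z^2+\BigO(V''' z^3)$ inside each summand of $K$. A direct computation shows that the $z^{-2}$ singularities of $\tfrac{5}{9}(\la-V)/\zeta^2$ and of $\tfrac{5}{4}V'^2/(\la-V)^2$ cancel, and the residual $z^{-1}$ singularities cancel with $V''/(\la-V)$, leaving $|K|\ls V''(x_\la)^2/a_\la^2+|V'''(x_\la)|/a_\la=\BigO(x_\la^{2\nu})$ on $I_2$ via \eqref{V.asm}. Hence $\int_{I_2}|K|/w_1^2\,\dd x\ls a_\la^{-2/3}x_\la^{2\nu}\approx(x_\la^{2\nu}/\la)^{2/3}$, which is $o(1)$ and in fact $\ls\kappa_\la$ via Lemma~\ref{lem:VV'} since $a_\la\approx\la x_\la^\nu$.

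For the outer pieces I would use $\zeta'=-(\la-V)^{1/2}$ to recognise the first summand of $K/(\la-V)^{1/2}$ as $\tfrac{5}{36}(\dd/\dd x)(1/\zeta)$, which integrates exactly to $\tfrac{5}{36}\zeta(x_\la-\delta)^{-1}-\tfrac{5}{36}\zeta(0)^{-1}=\tfrac{5}{36}+\BigO(\la^{-1/2})$ since $\zeta(0)\gs x_\la\la^{1/2}$. The third summand is integrated by parts via $V'^2/(\la-V)^{5/2}=\tfrac{2}{3}V'(\dd/\dd x)(\la-V)^{-3/2}$, producing a boundary term $-\tfrac{5}{24}V'(x_\la-\delta)/(\la-V(x_\la-\delta))^{3/2}$ and a residual integrand that combines with the second summand into $-\tfrac{1}{24}V''/(\la-V)^{3/2}$. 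The crucial observation is that the two boundary contributions at $x_\la-\delta$ cancel: using $\zeta(x_\la-\delta)=1$ with Taylor expansion and Lemma~\ref{lem:delta} gives $V'(x_\la-\delta)/(\la-V(x_\la-\delta))^{3/2}=\tfrac{2}{3}+\BigO(V''(x_\la)\delta/a_\la)$, so $\tfrac{5}{36}-\tfrac{5}{24}\cdot\tfrac{2}{3}=0$ exactly, with an error of the same type as the one controlled on $I_2$. The residual $\int_0^{x_\la-\delta}|V''|/(\la-V)^{3/2}\,\dd x$ is split at $x_\la/2$: on $(0,x_\la/2)$ one has $\la-V\gs\la$, so it is $\ls V'(x_\la/2)\la^{-3/2}\ls\kappa_\la$; on $(x_\la/2,x_\la-\delta)$ the change of variables $t=V(x)$ together with $|V''|/V'\ls x^\nu$ reduces it to an integral controlled by \eqref{V.int}, hence by $\kappa_\la$. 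The piece $I_3$ is handled identically, with the analogous cancellation at $x_\la+\delta_1$ and with $(V-\la)$ in place of $(\la-V)$.

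The main obstacle is the careful bookkeeping of two distinct cancellations: among the three summands of $K$ near the turning point (which makes $K$ bounded rather than singular on $I_2$), and between the boundary terms at $x_\la\mp\delta,\delta_1$ produced by the integration by parts on the outer pieces. Both require Taylor-expanding $V$ to the appropriate order and matching the numerical coefficients, notably $\tfrac{5}{36}=\tfrac{5}{24}\cdot\tfrac{2}{3}$; once these cancellations are identified, the remaining estimates reduce to routine manipulations using \eqref{V.asm} and \eqref{V.int}.
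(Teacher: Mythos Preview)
Your approach differs from the paper's in the choice of splitting points: you cut at $x_\la-\delta$ and $x_\la+\delta_1$ (where $|\zeta|=1$), whereas the paper cuts at $x_\la\pm\eps\, x_\la^{-\nu}$ for a suitably small fixed $\eps\in(0,1)$. Your boundary-cancellation argument at $x_\la-\delta$ is genuinely correct --- the numerical coincidence $\tfrac{5}{36}=\tfrac{5}{24}\cdot\tfrac{2}{3}$ is real and follows from $\zeta(x_\la-\delta)=1$ --- but the residuals it leaves do not give $\BigO(\kappa_\la)$ as you claim. The problematic piece is $\int_{x_\la/2}^{x_\la-\delta}|V''|\,(\la-V)^{-3/2}\,\dd x$: substituting $t=V(x)$ and using $|V''|/V'\ls x_\la^\nu$ on this range yields at best $x_\la^\nu(\la-V(x_\la-\delta))^{-1/2}\approx x_\la^\nu a_\la^{-1/3}=(x_\la^{3\nu}/a_\la)^{1/3}$, which by Lemma~\ref{lem:VV'} is $\ls\kappa_\la^{2/3}$, not $\kappa_\la$. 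The same exponent arises from your boundary error $\BigO(V''(x_\la)\delta/a_\la)\ls x_\la^\nu a_\la^{-1/3}$. For $V(x)=|x|^\beta$ with $\beta>4$ one has $\kappa_\la\approx\la^{-1/2-1/\beta}$ and $\kappa_\la^{2/3}\approx\la^{-1/3-2/(3\beta)}$; the latter dominates both $\la^{-1/2}$ and $\kappa_\la$. Hence your argument establishes $\cJ_K(\la)=o(1)$ but not the sharper $\BigO(\la^{-1/2}+\kappa_\la)$ stated in the lemma. Your appeal to \eqref{V.int} does not help here, since that condition controls $|V''|/V^{3/2}$ on $(x_\la,\infty)$, not $|V''|/(\la-V)^{3/2}$ on $(x_\la/2,x_\la-\delta)$.

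The paper sidesteps this by widening the middle region to $[x_\la-\eps x_\la^{-\nu},\,x_\la+\eps x_\la^{-\nu}]$. On this region the same cancellation in $K$ you identify (obtained there via a second-order expansion of $\zeta$ rather than of $V$) gives $|K|/w_1^2\ls x_\la^{2\nu}(\la-V)^{-1/2}$, which integrates to $(x_\la^{3\nu}/a_\la)^{1/2}\ls\kappa_\la$. The gain is on the outer region $[\xi_0,\,x_\la-\eps x_\la^{-\nu}]$: there $\la-V\approx\la$ throughout, so each term of $K/(\la-V)^{1/2}$ can be bounded separately --- in particular $\int V'^2/(\la-V)^{5/2}\,\dd x$ and the boundary term $V'(x_\la-\eps x_\la^{-\nu})/(\la-V(x_\la-\eps x_\la^{-\nu}))^{3/2}\approx a_\la/\la^{3/2}\ls\kappa_\la$ --- without any need for the delicate boundary cancellation you exploit. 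If you move your splitting points out to $x_\la\pm\eps x_\la^{-\nu}$, your integration-by-parts scheme on $I_1$ would in fact recover the sharp bound as well.
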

\begin{proof}
	We follow and extend the strategy in \cite[\S22.27]{Titchmarsh-1958-book2}. We split the integral into several regions; we define $\delta_\la':=\eps_1 x_\la^{-\nu}$ and $\delta_\la'':=\eps_2 x_\la^{-\nu}$, where $\eps_1, \eps_2 \in (0,1)$ will be determined below.
	
	\noindent
	$\bullet$ $0 \leq s \leq \xi_0$: Notice that $\zeta(s) \gs \la^\frac12$, hence (recall that $-\zeta' = (\la-V)^\frac12$)
	\begin{equation}
	\int_0^{\xi_0}  \frac{|K(s)|}{w_1(s)^2} \, \dd s 
	\ls
	\int_0^{\xi_0} \frac{-\zeta'(s)}{\zeta(s)^2} \, \dd s + \frac{1}{\la^\frac12} \ls \frac{1}{\la^\frac12}.
	\end{equation}
	
	\noindent
	$\bullet$ $\xi_0 \leq s \leq x_\la-\delta_\la'$: We give the estimate for any value of $\eps_1 \in (0,1)$; $\eps_1$ will be specified below, see \eqref{eps1.choice}, 
	\begin{equation}\label{reg.2.est}
	\begin{aligned}
	\int_{\xi_0}^{x_\la-\delta_\la'}  \frac{|K(s)|}{w_1(s)^2} \, \dd s 
	&\ls
	\int_{\xi_0}^{x_\la-\delta_\la'} \frac{-\zeta'(s)}{\zeta(s)^2} \, \dd s 
	+ \left|\int_{\xi_0}^{x_\la-\delta_\la'} \frac{V''(s) \, \dd s}{(\la-V(s))^\frac32} \right| 
	\\
	& \quad 
	+ \int_{\xi_0}^{x_\la-\delta_\la'} \frac{V'(s)^2 \, \dd s}{(\la-V(s))^\frac52}.  
	\end{aligned}
	\end{equation}
	The first integral on the r.h.s. is estimated using \eqref{zeta.est2} 
	\begin{equation}
	\int_{\xi_0}^{x_\la-\delta_\la'} \frac{-\zeta'(s)}{\zeta(s)^2} \, \dd s \leq \frac{1}{\zeta(x_\la-\delta_\la')} 
	\ls 
	\left( \frac{x_\la^{3\nu}}{a_\la}\right)^\frac12.
	\end{equation}
	Since by \eqref{V.asm}
	\begin{equation}
	\la-V(x_\la-\delta_\la') \approx a_\la \delta_\la' \approx \la, 
	\end{equation}
	we have for the third integral on the r.h.s. in \eqref{reg.2.est} that (we use \eqref{V.asm} and \eqref{kappa.lem})
	\begin{equation}
	\begin{aligned}
	\int_{\xi_0}^{x_\la-\delta_\la'} \frac{V'(s)^2 \, \dd s}{(\la-V(s))^\frac52} 
	&\ls
	\frac{\la \max\{1,x_\la^\nu\}}{\la^\frac52} \int_{\xi_0}^{x_\la-\delta_\la'} V'(s) \, \dd s 
	\\
	&\ls \max \left\{\frac{1}{\la^\frac12},\left( \frac{x_\la^{3\nu}}{a_\la}\right)^\frac12  \right\}.
	\end{aligned}
	\end{equation}
	Integration by parts in the second integral on the r.h.s. in \eqref{reg.2.est}, the choice of $\delta_\la'$ and \eqref{V.Delta.lem} lead to
	\begin{equation}
	\begin{aligned}
	\left|\int_{\xi_0}^{x_\la-\delta_\la'} \frac{V''(s) \, \dd s}{(\la-V(s))^\frac32} \right|  
	&\ls
	\frac{V'(x_\la-\delta_\la')}{(\la-V(x_\la-\delta_\la'))^\frac32} + \int_{\xi_0}^{x_\la-\delta_\la'} \frac{V'(s)^2 \, \dd s}{(\la-V(s))^\frac52}
	\\
	&\ls \max \left\{\frac{1}{\la^\frac12},\left( \frac{x_\la^{3\nu}}{a_\la}\right)^\frac12  \right\}.
	\end{aligned}
	\end{equation}
	Putting together the estimates above, we arrive at
	\begin{equation}
	\int_{0}^{x_\la-\delta_\la'}  \frac{|K(s)|}{w_1(s)^2} \, \dd s \ls  \frac{1}{\la^\frac12} +\left( \frac{x_\la^{3\nu}}{a_\la}\right)^\frac12.
	\end{equation}

	\noindent
	$\bullet$ $x_\la+\delta_\la'' \leq s$: The estimates are again obtained for any value of $\eps_2 \in (0,1)$ which will be specified later. The important observations are (based on the choice of $\delta_\la''$ and \eqref{V.asm})
	\begin{equation}
	\begin{aligned}
	V(x_\la+\delta_\la'')-V(x_\la) &\approx a_\la x_\la^{-\nu} \approx \la,
	\\
	|\zeta(x_\la+\delta_\la'')| &\gs \left( \frac{a_\la}{x_\la^{3\nu}}\right)^\frac12.
	\end{aligned}
	\end{equation}
	Moreover, since $V'(x)>0$ for all sufficiently large $x>0$, 
	\begin{equation}
	\left(\frac{V(x)}{V(x)-\la}\right)' = -\frac{\la V'(x)}{(V(x)-\la)^2} <0,
	\end{equation}
and (see \eqref{V.asm})
	\begin{equation}
	\frac{V(x_\la+ \delta_\la'')}{V(x_\la+ \delta_\la'')-V(x_\la)} \approx \frac{\la}{a_\la x_\la^{-\nu}} \approx 1, 	
	\end{equation}
	we obtain (recall \eqref{kappa.lem})
	\begin{equation}\label{reg.3.est}
	\begin{aligned}
	\int_{x_\la+\delta_\la''}^\infty  \frac{|K(s)|}{w_1(s)^2} \, \dd s 
	&\ls
	\int_{x_\la+\delta_\la''}^\infty \frac{|\zeta(s)|'}{|\zeta(s)|^2} \, \dd s 
	+ \int_{x_\la+\delta_\la''}^\infty \frac{|V''(s)| }{V(s)^\frac32} + 
	\frac{V'(s)^2 }{V(s)^\frac52} \, \dd s
	\\
	&\ls  \left( \frac{x_\la^{3\nu}}{a_\la}\right)^\frac12 + \kappa_\la \ls \kappa_\la.
	\end{aligned}
	\end{equation}
	
	\noindent
	$\bullet$ $x_\la -\delta_\la' \leq s \leq x_\la$: We integrate by parts twice in the formula for $\zeta$ and obtain
	\begin{equation}
	\zeta = \frac 23 \frac{(\la-V)^\frac32}{V'} \left(
	1- \frac25 \frac{(\la-V)V''}{V'^2} - T
	\right),
	\end{equation}
	where
	\begin{equation}
	T(s) = \frac25 \frac{V'(s)}{(\la-V(s))^\frac32} \int_s^{x_\la} (\la-V(t))^\frac52 \left(\frac{V''(t)}{V'(t)^3}\right)' \, \dd t.
	\end{equation}
	Using \eqref{V.asm}, we obtain
	\begin{equation}
	\frac{(\la-V(s))V''(s)}{V'(s)^2} \ls \frac{a_\la \delta_\la' x_\la^\nu}{a_\la} \ls \eps_1.
	\end{equation}
	To estimate $T$, we first notice that by \eqref{V.asm}, \eqref{V.Delta.lem} and \eqref{kappa.lem}
	\begin{equation}
	\left|\left(\frac{V''(t)}{V'(t)^3}\right)' \right| \ls \frac{|V'''(t)|}{V'(t)^3} + \frac{V''(t)^2}{V'(t)^4} 
	\ls \left( \frac{x_\la^\nu}{a_\la}\right)^2.
	\end{equation}
	Thus
	\begin{equation}\label{S.est}
	\begin{aligned}
	|T(s)| &\ls \frac{x_\la^{2\nu}}{a_\la^2(\la-V(s))^\frac32 } \int_s^{x_\la} V'(t) (\la-V(t))^\frac52 \, \dd t
	\ls \frac{x_\la^{2\nu}}{a_\la^2}(\la-V(s))^2
	\\
	&
	\ls \frac{x_\la^{2\nu}}{a_\la^2} (\la-V(x_\la-\delta_\la'))^2 
	\ls \eps_1^2.
	\end{aligned}
	\end{equation}
	Hence it is possible to select $\eps_1 \in (0,1)$ so small that 
	\begin{equation}\label{eps1.choice}
	\left|
	\frac25 \frac{(\la-V)V''}{V'^2} - T
	\right| \leq \frac 14
	\end{equation}
	and so, using Taylor's theorem for $\zeta^{-2}$ and cancellations in $K$, one arrives at (using \eqref{S.est},  \eqref{V.asm} and \eqref{w12.def})
	\begin{equation}
	\begin{aligned}
	\frac{|K(s)|}{w_1(s)^2} &\ls \frac{|K(s)|}{(\la-V(s))^\frac12}
	\ls
	\frac{V'(s)^2}{(\la-V(s))^\frac52} \left[ \left(\frac{(\la-V(s))V''(s)}{V'(s)^2}\right)^2 +  |T(s)| \right]
	\\
	&\ls \frac{x_\la^{2\nu}}{(\la-V(s))^\frac12}.
	\end{aligned}
	\end{equation}
	Hence,
	\begin{equation}
	\int_{x_\la-\delta_\la'}^{x_\la}  \frac{|K(s)|}{w_1(s)^2} \, \dd s 
	\ls \frac{x_\la^{2\nu}}{a_\la} (\la-V(x_\la-\delta_\la'))^\frac 12 
	\ls \left(\frac{x_\la^{3\nu}}{a_\la}\right)^\frac12.
	\end{equation}
	\noindent
	$\bullet$ $x_\la \leq s \leq x_\la + \delta_\la''$: The estimate and the choice of $\eps_2$ in this region is analogous to the previous case. We omit the details.
	
	In summary, putting all estimates together and using \eqref{kappa.lem}, we obtain the claim \eqref{J.def}. 
\end{proof}

\begin{proof}[{Proof of Theorem~\ref{thm:EF.asym}}]
	We follow the steps in \cite{Giertz-1964-14}; the main differences are the additional perturbation $W$ and new estimate of $\cJ(\la)$ from Lemma~\ref{lem:J}. 
	
	Using \eqref{uv.Wron} and variation of constants, we can find a solution (distributional, since $W \in L_{\rm loc}^1(\R)$ only) of \eqref{y.ode} by solving the integral equation
	\begin{equation}\label{y.int.eq}
	y(x) = u(x) + \int_x^\infty G(x,s) (K(s) + W(s)) y(s) \, \dd s,
	\end{equation}
	where $G(x,s) = u(x) v(s) - v(x) u(s)$. Using the notation $\hat f$ for a function $f$ multiplied by $w_1 w_2$, we rewrite the integral equation \eqref{y.int.eq} as
	\begin{equation}\label{y1.eq}
	\hat y(x) = \hat u(x) + \int_x^\infty H(x,s) \frac{K(s)+W(s)}{w_1(s)^2} \hat y(s) \, \dd s;
	\end{equation}
	here 
	\begin{equation}
	H(x,s) = \left( \hat u(x) \hat v(s) - \hat v(x) \hat u(s) \right) w_2(s)^{-2}
	\end{equation}
	and $|H(x,s)| \ls 1$ in $0 \leq x \leq s$, see \eqref{|u|.|v|.est}. 
	Let 
\begin{equation}\label{J.la.def}
\cJ_{K+W}(\la) := \int_0^\infty \frac{K(s)+W(S)}{w_1(s)^2} \, \dd s = \cJ_K + \cJ_W. 
\end{equation}
If $\cJ_{K+W}(\la) = o(1)$ as $\la \to +\infty$, then we can solve the equation \eqref{y1.eq} in $L^\infty(\R_+)$ and we can write the solution as
\begin{equation}\label{r1.est}
\hat y = \hat u + \hat r, \qquad \|\hat r\|_{L^\infty(\R_+)} \ls  \frac{\cJ_{K+W}(\la)}{1- \cJ_{K+W}(\la)}=:C(\la).
\end{equation}
Returning back to $y$, we obtain \eqref{yu.rel.ap} and \eqref{r.est}. 
	
The estimate on $\cJ_K$ is the main technical step of the proof, see Lemma~\ref{lem:J} above, the decay of $\cJ_W$ is guaranteed by Assumption~\ref{asm:W}.

Finally, the formula \eqref{y.norm} for the $L^2$-norm of $y$ follows from \eqref{u.norm} as in \cite[Thm.~1]{Giertz-1964-14}. Namely, 
	\begin{equation}
	y^2 = u^2 + \frac{\hat r(2 \hat u +\hat r)}{w_1^2 w_2^2}
	\end{equation}
	and
	\begin{equation}\label{w12.int.est}
	\begin{aligned}
	\int_0^\infty \frac{\dd x}{w_1(x)^2 w_2(x)^2} &=
	\int_0^{x_\la-\delta} \frac{\pi \, \dd x}{(\la-V(x))^\frac12} + \BigO\left(\frac{\delta+\delta_1}{a_\la^\frac13}\right) 
	\\
	& \quad + \int_1^\infty \frac{e^{-2t} \, \dd t}{(V(\zeta^{-1}(t))-\la)}
	\\
	&= 
	\int_0^{x_\la} \frac{\pi \, \dd x}{(\la-V(x))^\frac12} + \BigO(a_\la^{-\frac 23}), \qquad \la \to + \infty,
	\end{aligned}
	\end{equation}
	see the proof of Lemma~\ref{lem:u^2} for more details on the estimates. The claim \eqref{y.norm} then follows from \eqref{u.norm}, \eqref{w12.int.est} and $\|\hat r(2 \hat u +\hat r)\|_{L^\infty} \ls C(\la)$, see \eqref{r1.est} and \eqref{|u|.|v|.est}.
\end{proof}

\section{Comparison with existing results}
\label{sec:discussion}

\subsection{Concentration measures for orthogonal polynomials}

It is interesting to compare the concentration phenomenon \eqref{lim.nu.k} of measures \eqref{nu.k.def} with its analogue in the case of orthogonal polynomials $\{p_n(x)\}$ for the weights $\exp(-|x|^\alpha)$, $\alpha>0$, or even more general non-even weights $w(x)= \exp(-\tilde w(x))$ with properly chosen $\tilde w$. Following \cite{Kriecherbauer-1999-6,Levin-2001}, let 
\begin{equation}
\kappa_\alpha := \frac{\Gamma \left(\frac{\alpha}{2}\right)\Gamma \left(\frac{1}{2}\right)}{\Gamma \left(\frac{\alpha+1}{2}\right)},
\qquad 
w_\alpha(x):=\exp(-\kappa_\alpha|x|^\alpha ), \qquad \alpha >0;
\end{equation}
the corresponding system of orthogonal polynomials $\{p_n(x)\}$
\begin{equation}
\int_\R p_n(x) p_m(x) w_\alpha(x) \, \dd x = \delta_{mn}, \qquad m,n \in \Z,
\end{equation}
has the property, as $n \to \infty$, 
\begin{equation}\label{pn.1}
\begin{aligned}
&p_n(n^\frac 1\alpha x) \sqrt{w_\alpha(n^\frac 1\alpha x)} =
\\
&
\qquad 
\sqrt{\frac{2}{\pi n^{\frac 1 \alpha}}} (1-x^2)^{-\frac 14}
\left[
\cos \left(
n \pi \int_1^x \psi_\alpha(y) \; \dd y + \frac 12 \arcsin x
\right)
+ \BigO(n^{-1}
\right],
\end{aligned}
\end{equation}
where $0<\delta \leq x \leq 1 - \delta$ with $\delta$ arbitrarily small and 
\begin{equation}
\psi_\alpha(y) = \frac{\alpha}{\pi} x^{\alpha-1} \int_1^{\frac 1x} \frac{u^{\alpha-1}}{\sqrt{u^2-1}} \, \dd u.
\end{equation}
Formula \eqref{pn.1} and elementary trigonometry imply that, as $n \to \infty$,
\begin{equation}
\begin{aligned}
&n^\frac 1\alpha p_n^2(n^\frac 1\alpha x) w_\alpha(n^\frac 1\alpha x)
= 
\\
&
\qquad
\frac 1 \pi \frac{1}{\sqrt{1-x^2}}
\left[
1 + \frac12 \sin 
\left(
2 n \pi \int_1^x \psi_\alpha(y) \; \dd y + \frac 12 \arcsin x
\right)
+ \BigO(n^{-1})
\right].
\end{aligned}
\end{equation}
Thus, for any $f \in C([-1,1])$, Riemann-Lebesgue lemma gives 
\begin{equation}
\lim_{n \to \infty} \int_\delta^{1-\delta} f(x) n^\frac 1\alpha p_n^2(n^\frac 1\alpha x) w_\alpha(n^\frac 1\alpha x) \, \dd x = \frac{1}{\pi}\int_\delta^{1-\delta} \frac{f(x)}{\sqrt{1-x^2}} \; \dd x.
\end{equation}
Moreover, by \cite[Thm.1.16]{Kriecherbauer-1999-6},
\begin{equation}
\sup_{n \geq 1} \sup_{x \in \R} \, n^\frac 1\alpha p_n^2(n^\frac 1\alpha x) w_\alpha(n^\frac 1\alpha x) \sqrt{|1-x^2|} < \infty, 
\end{equation}
so
\begin{equation}
\lim_{n \to \infty} \int_{-1}^{1} f(x) n^\frac 1\alpha p_n^2(n^\frac 1\alpha x) w_\alpha(n^\frac 1\alpha x) \, \dd x = \frac{1}{\pi}\int_{-1}^{1} \frac{f(x)}{\sqrt{1-x^2}} \; \dd x.
\end{equation}

On the whole real line, one can use the following inequalities, see \cite[Thm.19, p.16, Eq.(1.66)]{Levin-2001}. Let $a>1$ and $P$ be a polynomial of degree smaller than or equal to $n$. Then 
\begin{equation}
\int_{|x| \geq a}  P^2(n^\frac 1 \alpha x) \omega_\alpha (n^\frac 1 \alpha x) \, \dd x 
\leq 
C_1  \exp(-C_2 n) \int_{-1}^1 P^2(n^\frac 1 \alpha x) \omega_\alpha (n^\frac 1 \alpha x) \, \dd x
\end{equation}
for all $n \geq 1$; the constants $C_1$, $C_2$ depend on $a$, but not on $n$ or $P$. These inequalities imply 
\begin{equation}\label{lim.OP}
\lim_{n \to \infty} \int_{- \infty}^\infty f(x) n^\frac 1\alpha p_n^2(n^\frac 1\alpha x) w_\alpha(n^\frac 1\alpha x) \, \dd x 
= 
\frac{1}{\pi}\int_{-1}^{1} \frac{f(x)}{\sqrt{1-x^2}} \; \dd x
\end{equation}
for any bounded continuous function on $\R$. 

A striking difference between \eqref{lim.OP} and \eqref{lim.nu.k} is that in the case of orthogonal polynomials the concentration measure \emph{does not depend on $\alpha$}, or $\tilde w$ in a more general case of weights $\exp(-\tilde w(x))$.    

\subsection{Semi-classical defect measures}
\label{subsec:s-c}

In classical mechanics, \cf~\cite{Arnold-1989-60}, a particle with position $x(t)$ subject to the differential equation
\begin{equation}\label{scm:ham_flow}
	\begin{cases}
	\ddot{x}(t) +V(x(t)) = 0,
	\\ 
	(x(0), \dot{x}(0)) = (x_0, \xi_0) 
	\end{cases}
\end{equation}
remains for all times on the energy surface
\[
	(x(t), \dot{x}(t)) \in \{(x,\xi) \::\: \xi^2 + V(x) = \xi_0^2 + V(x_0)\}
\]
and travels along the trajectory $(\dot{x}(t), \dot{\xi}(t))$ obeying
\[
	(\dot{x}(t), \dot{\xi}(t)) = (2\xi(t), -V'(x(t))).
\]
The classical-quantum correspondence suggests that, in the high-energy limit, the $L^2$-mass of an eigenfunction should be distributed in the same way as the average position of a classical particle: since a classical particle passes through an interval $[x_*, x_* + \dd x]$ in physical space with velocity near $\w(x_*)$ or $-\w(x_*)$, where 
\begin{equation}\label{scm.classicalvelocity}
	\w(x_*) = \sqrt{\lambda - V(x_*)},
\end{equation}
we obtain the heuristic (for a normalization constant $c_0$)
\begin{equation}\label{scm.classicalheuristic}
	|u(x)|^2\,\dd x \ ``=" \ \frac{c_0}{\w(x)}\,\dd x = \frac{c_0}{\sqrt{\lambda - V(x)}}\,\dd x,
\end{equation}
which agrees with Theorem \ref{thm:lim.EF} after the corresponding scaling.

To make this correspondence precise, one can use the notion of semiclassical defect measures (see, for instance, \cite[Ch.~5]{Zworski-2012}). The following discussion will be under weaker hypotheses than Theorem \ref{thm:lim.EF}, because our goal is only to show that the precise asymptotics obtained agree with the semiclassical prediction.

Let $V:\R \to \R$ be even, smooth and suppose that
there exists some $\beta > 0$ such that
\begin{equation}\label{scm.Vsymbol}
	\left|V^{(k)}(x)\right| \ls (1+|x|)^{\beta - k}, \qquad k \in \N_0, \quad x \in \R.
\end{equation}
Suppose also that
\begin{equation}\label{scm.Vwell}
\begin{aligned}
V'(x) &> 0, \quad x > 0. 
\end{aligned}
\end{equation}
and that there exists $x_0 > 0$ such that 
\begin{equation}\label{scm.Vgrowth}
V'(x) \gs (1+x)^{\beta-1}, \quad x > x_0;
\end{equation}
the latter implies that, for $|x|$ sufficiently large,
\[
	V(x) \approx (1+|x|)^{\beta}.
\]

We consider the semiclassical Schr\"odinger operator
\[
	A_{\hbar} = -\hbar^2\frac{\dd^2}{\dd x^2} + V(x)
\]
in the limit $\hbar \to 0^+$. 

For instance, if $V(x) = |x|^\beta$ for $\beta \in 2\N$, scaling gives a unitary equivalence
\[
	-\frac{\dd^2}{\dd x^2} + |x|^\beta \sim \hbar^{-\frac{2\beta}{2+\beta}}\left(-\hbar^2 \frac{\dd^2}{\dd x^2} + |x|^\beta\right).
\]
Other potentials can be treated by rescaling and controlling the error, but this analysis is outside the aim of this work. We emphasize that the assumptions on $Q$ in Theorem \ref{thm:lim.EF} are significantly weaker than the hypotheses on $V$ here, \cf~\eqref{asm:V*}, Assumption~\ref{asm:V} and \ref{asm:W} and comments in Introduction.

Suppose that for $\lambda_0 > \inf V(x)$, there exists a sequence $\{\hbar_k\}_{k \in \N}$ of positive numbers tending to zero and eigenfunctions $\{u_{k}\}_{k \in \N}$ obeying $\|u_k\| = 1$ and
\[
	A_{\hbar_k}u_{k} = \lambda_0 u_{k}.
\]

For each $u_{k}$, one can define the functional
\[
	\varphi_{k}(b) = \int_{\R} \overline{u_{k}(x)}b_{\hbar_k}^w(x,\hbar_k D_x)u_{k}(x)\,\dd x, \quad b \in C_c^\infty(\R).
\]
Here, $D_x=-\ii \frac{\dd}{\dd x}$ and $b_\hbar^w(x,\hbar D_x)$ is the Weyl quantization (see e.g.\ \cite[Ch.~4]{Zworski-2012}); when $b\in C_c^\infty(\R)$, the Weyl quantization of $b$ is a compact operator on $L^2(\R)$ which takes $\mathscr{S}'(\R)$ to $\mathscr{S}(\R)$.

Following \cite[Thm.~5.2]{Zworski-2012} there is a subsequence $\{u_{k_j}\}_{j \in \N}$ with $\hbar_{k_j} \to 0^+$ for which the functionals $\varphi_{k}$ converge to a non-negative Radon measure $\mu$ in the sense that, for each $b\in C_c^\infty(\R)$,
\begin{equation}\label{scm.mu.limit}
	\lim_{j\to\infty} \varphi_{k_j}(b) = \int_{\R^2} b(x,\xi)\,\dd \mu(x,\xi).
\end{equation}
We will show that this $\mu$ is unique and that therefore $\varphi_{k} \to \mu$ in the same sense since every subsequence admits a further subsequence tending to $\mu$.

By \cite[Thm.~5.3~or~Thm.~6.4]{Zworski-2012}, 
\begin{equation}\label{scm.suppmu}
	\operatorname{supp}\mu \subseteq \{\xi^2 + V(x) = \lambda_0\},
\end{equation}
so let us define, in analogy with \eqref{scm.classicalvelocity},
\begin{equation}\label{rho.def}
	\w(x) = \sqrt{\lambda_0 - V(x)}
\end{equation}
for those $x$ such that $V(x) < \lambda_0$. There exists a measure $\nu_+$ such that, when $\supp b \subset \{\xi > 0\}$, then
\begin{equation}\label{scm.nupush}
	\int_{\R^2} b(x,\xi)\,\dd \mu(x,\xi) = \int_{\{V(x) < \lambda_0\}}b(x, \w(x))\,\dd \nu_+(x).
\end{equation}

By \cite[Thm.~5.4]{Zworski-2012}, for any $b \in C_c^\infty(\R^2)$,
\begin{equation}\label{scm.bracket}
	\int_{\R^2} \{a, b\}(x,\xi)\,\dd \mu(x,\xi) = 0,
\end{equation}
where the Poisson bracket $\{a, b\}$ of the symbol $a(x,\xi) = \xi^2 + V(x)$ of $A_\hbar$ with $b$ is
\[
	\{a, b\} = a_\xi b_x - a_x b_\xi = 2\xi b_x - V'(x)b_\xi.
\]
This corresponds to invariance of $\mu$ under the classical Hamilton flow associated to $a(x,\xi)$, which in the case of a Schr\"odinger operator corresponds to \eqref{scm:ham_flow}.

Finally, since in our situation the support of $\mu$ is compact, we show that
\begin{equation}\label{scm.mass1}
	\int_{\R^2} \dd \mu(x,\xi) = 1
\end{equation}
as follows. For any $b(x,\xi) \in C_c^\infty(\R)$ such that $b \equiv 1$ on $\{\xi^2 + V(x) = \lambda_0\}$, we use that the Weyl quantization of the constant $1$ function is the identity operator to write
\begin{equation}\label{scm.mass1.1}
	1 = \int_{\R} |u_{k_j}(x)|^2 \,\dd x = \int_{\R} \overline{u_{k_j}(x)}\left(b^w(x, \hbar_k{k_j}D_x) + (1-b)^w(x,\hbar_{k_j} D_x)\right) u_{k_j}(x) \,\dd x.
\end{equation}
By \cite[Thm.~6.4]{Zworski-2012},
\begin{equation}\label{scm.mass1.2}
	(1-b)^w (x,\hbar_{k_j} D_x)u_k(x) = \BigO(\hbar_{k_j}^\infty),
\end{equation}
meaning that its $L^2(\R)$ norm is smaller than any power of $\hbar_{k_j}$ as $\hbar_{k_j} \to 0^+$, and by the definition \eqref{scm.mu.limit} of $\mu(x, \xi)$ and the fact that $b \equiv 1$ on $\operatorname{supp} \mu$,
\begin{equation}\label{scm.mass1.3}
	\lim_{k \to \infty} \int_{\R} \overline{u_k(x)}b^w(x, \hbar_k D_x)u_k(x)\,\dd x = \int_{\R^2} b(x,\xi)\,\dd \mu (x, \xi) = \int_{\R^2} \dd \mu(x,\xi).
\end{equation}
Taking \eqref{scm.mass1.1}, \eqref{scm.mass1.2}, and \eqref{scm.mass1.3} together proves \eqref{scm.mass1}.

We now prove that a measure $\mu$ satisfying the properties of a semiclassical defect measure must have the form matching the classical heuristic \eqref{scm.classicalheuristic} generalized in Theorem \ref{thm:lim.EF}.

\begin{proposition}
Let $V(x) \in C^\infty(\R; \R)$ satisfy \eqref{scm.Vsymbol}, \eqref{scm.Vgrowth}, and \eqref{scm.Vwell}. Let $\lambda_0 > V(0) = \inf V(x)$, and let $\mu$ be a measure satisfying \eqref{scm.suppmu}, \eqref{scm.bracket}, and \eqref{scm.mass1} and let $\w$ be as in \eqref{rho.def}. Then the measure $\mu$ obeys for all $b \in C_c^\infty(\R^2)$
\[
	\int b(x,\xi)\,\dd \mu = c_0\int_{-x_{\lambda_0}}^{x_{\lambda_0}} 
	\left( b(x, \w(x))+b(x, -\w(x)) \right) \frac{\dd x}{\w(x)},
\]
where the normalization constant $c_0$ is such that $\int \dd \mu = 1$. 
\end{proposition}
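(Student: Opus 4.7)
The plan is to identify $\mu$, via invariance under the Hamiltonian flow together with the support condition, as a positive multiple of the unique flow-invariant probability measure on the energy surface, and then read off its density in the $(x,\xi)$ coordinates.

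First I would describe the geometry of the energy surface $\Sigma_{\lambda_0}:=\{(x,\xi)\in\R^2:\xi^2+V(x)=\lambda_0\}$. Since $V$ is even with $V(0)=\inf V<\lambda_0$, strictly increasing on $(0,\infty)$ by \eqref{scm.Vwell}, and tending to $+\infty$ by \eqref{scm.Vgrowth}, the equation $V(x)=\lambda_0$ has exactly two real solutions $\pm x_{\lambda_0}$, and $\Sigma_{\lambda_0}$ is the union of the two graphs $\xi=\pm\w(x)$ on $|x|\le x_{\lambda_0}$, glued at the turning points $(\pm x_{\lambda_0},0)$. The gradient $\nabla a=(V'(x),2\xi)$ vanishes on $\Sigma_{\lambda_0}$ only where $\xi=0$ and $V'(x)=0$ simultaneously; but $\xi=0$ on $\Sigma_{\lambda_0}$ forces $x=\pm x_{\lambda_0}$, where $V'(\pm x_{\lambda_0})\neq 0$. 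Hence $\Sigma_{\lambda_0}$ is a smooth simple closed curve diffeomorphic to $S^1$, the Hamilton vector field $H_a=(2\xi,-V'(x))$ never vanishes on it, and the flow $\phi_t$ of $H_a$ is complete (by energy conservation and compactness of $\Sigma_{\lambda_0}$) and rotates $\Sigma_{\lambda_0}$ periodically with some period $T>0$.

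Second, I would upgrade \eqref{scm.bracket} to genuine $\phi_t$-invariance of $\mu$. For fixed $b\in C_c^\infty(\R^2)$ pick $\chi\in C_c^\infty(\R^2)$ with $\chi\equiv 1$ on a bounded neighborhood $U$ of $\Sigma_{\lambda_0}$; set $b_t:=\chi\cdot(b\circ\phi_t)$, which belongs to $C_c^\infty(\R^2)$ uniformly in $t$ on compact intervals. Applying \eqref{scm.bracket} to $b_t$ and using $\{a,\chi\}\equiv 0$ on $\operatorname{supp}\mu\subset\Sigma_{\lambda_0}$ together with $\{a,b\circ\phi_t\}=\frac{\dd}{\dd t}(b\circ\phi_t)$ yields $\frac{\dd}{\dd t}\int b\circ\phi_t\,\dd\mu=0$, so $\int b\,\dd\mu=\int b\circ\phi_t\,\dd\mu$ for every $t\in\R$. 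Parametrizing $\Sigma_{\lambda_0}$ by flow time $t\mapsto\phi_t(p_0)$ from a base point $p_0$ identifies $\Sigma_{\lambda_0}\cong\R/T\Z$, and the push-forward of $\mu$ becomes a translation-invariant Radon measure on $\R/T\Z$, hence a positive multiple of Lebesgue measure. Pulling this back, $\mu$ coincides, up to a constant, with the unique flow-invariant probability measure on $\Sigma_{\lambda_0}$.

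Finally, I would compute this measure explicitly in $x$-coordinates. On the upper branch $\{(x,\w(x)):|x|<x_{\lambda_0}\}$ the flow obeys $\dot x=2\xi=2\w(x)$, whence the flow-time element is $\dd t=\dd x/(2\w(x))$; on the lower branch $\{(x,-\w(x))\}$ one has $\dot x=-2\w(x)$ and $\dd t=-\dd x/(2\w(x))$. Consequently, for every $b\in C_c^\infty(\R^2)$,
\begin{equation*}
\int b\,\dd\mu=c_0\int_{-x_{\lambda_0}}^{x_{\lambda_0}}\bigl(b(x,\w(x))+b(x,-\w(x))\bigr)\frac{\dd x}{\w(x)},
\end{equation*}
where $c_0$ absorbs the factor $\tfrac12$ and is fixed by \eqref{scm.mass1}; convergence of $\int\dd x/\w(x)$ near $\pm x_{\lambda_0}$ follows from $\lambda_0-V(x)\approx V'(\pm x_{\lambda_0})(x_{\lambda_0}\mp x)$. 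The main subtlety is the uniqueness step, which rests on the fact that $\Sigma_{\lambda_0}$ is a single periodic orbit so that translation-invariant measures on $\R/T\Z$ form a one-parameter family; the cut-off passage from \eqref{scm.bracket} to $\phi_t$-invariance and the final change-of-variables computation are routine.
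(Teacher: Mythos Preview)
Your proof is correct and takes a genuinely different route from the paper's.

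The paper works locally: it restricts to the half-plane $\{\xi>0\}$, pushes $\mu$ forward to a measure $\nu_+$ on the $x$-axis, and uses the identity $\frac{\dd}{\dd x}b(x,\w(x))=\frac{1}{2\w(x)}\{a,b\}(x,\w(x))$ together with \eqref{scm.bracket} to deduce $\w(x)\,\dd\nu_+=c_+\,\dd x$. The same is done on $\{\xi<0\}$, yielding a constant $c_-$. To show $c_+=c_-$ the paper then repeats the argument projecting onto the $\xi$-axis over $\{x>0\}$, obtaining a third constant $d_+$, and matches all three on the overlaps of the four quadrant-like charts. Your approach is global: you upgrade \eqref{scm.bracket} to honest $\phi_t$-invariance, observe that $\Sigma_{\lambda_0}$ is a single periodic orbit, and invoke uniqueness of translation-invariant measures on $\R/T\Z$. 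The density then drops out of $\dot x=2\xi$. Your argument is cleaner in that the equality $c_+=c_-$ is automatic---the periodic orbit passes through the turning points, so there is only one constant to begin with---whereas the paper's chart-by-chart approach needs the extra $\xi$-projection step (and the paper itself remarks that this matching step is unavailable in the rapidly-growing case $\omega_\beta=0$). The paper's approach, on the other hand, stays closer to the raw bracket condition and avoids setting up the flow and its completeness.
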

\begin{proof}
We observe that
\begin{equation}\label{scm.xdiff}
	\begin{aligned}
	\frac{\dd}{\dd x}b(x,\w(x)) &= b_x(x,\w(x)) + \w'(x)b_\xi(x,\w(x))
	\\ &= b_x(x,\w(x)) - \frac{V'(x)}{2\w}b_\xi(x,\w(x))
	\\ &= \frac{1}{2\w(x)}\left(2\w(x) b_x(x,\w(x)) - V'(x)b_\xi(x,\w(x))\right)
	\\ &= \frac{1}{2\w(x)}\{a, b\}(x,\w(x)).
	\end{aligned}
\end{equation}
Letting $b\in C_c^\infty(\R^2)$ be such that $\supp b \subset \{\xi > \delta\}$ for some $\delta > 0$, we obtain from \eqref{scm.nupush}, \eqref{scm.bracket}, and \eqref{scm.xdiff} that
\[
	\int \left(\frac{\dd}{\dd x}b(x,\w(x))\right) 2\w(x)\,\dd \nu_+(x)
\]
vanishes. Taking $b(x,\xi) = f(x)\chi_{[\delta, \delta^{-1}]}(\xi)$ for $f\in C_c^\infty(\R)$ arbitrary and for $\chi$ a cutoff function, letting $\delta \to 0^+$ allows us to conclude that
\[
	\int f'(x)\,\w(x)\,\dd \nu_+(x) = 0
\]
for all $f\in C_c^\infty(\R)$. Therefore along $\{\xi^2 + V(x) = \lambda_0\}$,
\[
\dd \nu_+(x) = \frac{c_+}{\w}\dd x
\]
for some $c_+$ which is positive because $\mu$ is a positive measure.

When $\supp b(x, \xi) \subset \{\xi < 0\}$, the same argument shows that there is some $c_- > 0$ such that 
\[
	\int b(x,\xi) \,\dd \mu(x,\xi) = \int b(x,-\w(x))\,\frac{c_-}{\w}(x)\dd x.
\]

One can show then that $c_+ = c_-$ by projecting onto the $\xi$ variable instead of the $x$ variable: let 
\[
	\tilde{x}(\xi) = V^{-1}(\lambda_0 - \xi^2)
\]
where the inverse image is chosen positive, and let $\dd \rho_+(\xi)$ be such that when $\supp b \subset \{x > 0\}$,
\[
	\int b(x,\xi)\,\dd\mu(x,\xi) = \int b(\tilde{x}(\xi), \xi)\,\dd \rho_+(\xi).
\]
Then $\tilde{x}'(\xi) = -\frac{2\xi}{V'(\tilde{x}(\xi))}$,
\[
	\frac{\dd}{\dd x}b(\xi(x), x) = V'(\tilde{x}(\xi))\{a, b\}(\tilde{\xi}(x), x).
\]
The earlier argument (along with the fact that $V'(x) > 0$ for $x > 0$) shows that there is some $d_+ > 0$ such that
\[
	\dd \rho_+(\xi) = \frac{d_+}{V'(\tilde{x}(\xi))}\,\dd \xi.
\]

On $\{\xi^2 + V(x) = \lambda_0\}$, note that
\[
	\left|\frac{\dd \xi}{\dd x}\right| = \frac{|V'(x)|}{2|\tilde{\xi}(x)|}.
\]
Since the pull-backs of $\dd \nu_\pm(x) = \frac{c_\pm}{|\xi|}\dd x$ and $\dd \rho_+$ agree on $a^{-1}(\{\lambda_0\}) \cap \{x>0, \xi>0\}$ and since $\dd \rho_+$ and $\dd \nu_-$ agree on $\{x > 0, \xi < 0\}$ we can conclude that $c_+ = d_+ = c_-$. We remark that this argument is not available in the case $\omega_\beta = 0$ corresponding to a very rapidly-growing potential.

Finally, we conclude that $c_0 = c_+$ is such that $\int \dd \mu = 1$ by the hypothesis \eqref{scm.mass1}.
\end{proof}

%\newpage

{\footnotesize
\bibliographystyle{acm}
\bibliography{C:/Data/00Synchronized/references}
%\bibliography{references}
}

\end{document}